 \def\d{\delta}
\def\x{\xi}
\def\Pb{\textrm{Prob}}
\def\mb{\mathbf}
\def\z{\zeta}
\def\i{\mathbf{i}}
\def\j{\mathbf{j}}
\def\k{\mathbf{k}}
\def\t{\tilde}
\def\E{{\mathbb E}}
\def\P{{\mathcal P}}
\def\O{{\Omega}}
\def\o{{\omega}}
\def\A{{\mathcal{A}}}
\def\_#1{{\underline{#1}}}
\def\Re{\mathop{\rm Re}\nolimits}
\newtheorem{theorem}{Theorem}
\newproof{proof}{Proof}
\begin{document}
\begin{frontmatter}

\title{Mesh refinement for uncertainty quantification through model reduction}

\author{Jing Li}

\address{School of Mathematics, University of Minnesota,
          Minneapolis, MN 55455. Email: lixxx873@umn.edu.}

\author{Panos Stinis}

\address{School of Mathematics, University of Minnesota,
          Minneapolis, MN 55455. Email: stinis@umn.edu.}

\begin{abstract}
We present a novel way of deciding when and where to refine a mesh in probability space in order to facilitate the uncertainty quantification in the presence of discontinuities in random space. A discontinuity in random space makes the application of generalized polynomial chaos expansion techniques prohibitively expensive. The reason is that for discontinuous problems, the expansion converges very slowly. An alternative to using higher terms in the expansion is to divide the random space in smaller elements where a lower degree polynomial is adequate to describe the randomness. In general, the partition of the random space is a dynamic process since some areas of the random space, particularly around the discontinuity, need more refinement than others as time evolves. In the current work we propose a way to decide when and where to refine the random space mesh based on the use of a reduced model. The idea is that a good reduced model can monitor accurately, within a random space element, the cascade of activity to higher degree terms in the chaos expansion. In terms, this facilitates the efficient allocation of computational sources to the areas of random space where they are more needed. For the Kraichnan-Orszag system, the prototypical system to study discontinuities in random space, we present theoretical results which show why the proposed method is sound and  numerical results which corroborate the theory.
\end{abstract}

\begin{keyword}
Adaptive mesh refinement, Multi-element, gPC, Model reduction.
\end{keyword}
\end{frontmatter}

\section{Introduction} \label{sec:intro}
Generalized polynomial chaos (gPC) is a frequently used approach to represent non-statistical uncertain quantities when solving differential equations involving uncertainty in initial conditions, boundary conditions, randomness in material parameters and etc. Based on the results of Wiener\cite{Wiener38}, spectral expansion employing Hermite orthogonal polynomials was introduced by Ghanem et. al \cite{GhanemS91} for various uncertainty quantification problems in mechanics. This method was generalized by Xiu and Karniadakis \cite{XiuK_SISC02,Xiu_CICP09} to include other families of orthogonal polynomials. When the solution is sufficiently regular with respect to the random inputs, the gPC expansion has an exponential convergence rate \cite{XiuK_SISC02}. However, if the solution is not smooth, the rate of convergence of gPC deteriorates similarly to the deterioration of a Fourier expansion of non-smooth functions \cite{HesthavenGG}. The reason for the lack of smoothness can be, for example, the presence of certain values of the random input around which the solution may change qualitatively (this is called a discontinuity in random space). For such problems, the brute force approach of using more terms in the gPC expansion is prohibitively expensive. 

An alternative to using higher terms in the expansion is to divide the random space in smaller elements where a lower degree polynomial is adequate to describe the randomness \cite{WanK_JCP05}. This approach requires a criterion (a mechanism) to decide how to best partition the random space. Ideally, the criterion will focus on parts of random space, like discontinuities, where there is more sensitive dependence on the value of the random parameters.  In addition to the presence of discontinuities in random space, there are problems which simply have too many sources of uncertainty to allow for a high degree expansion in all dimensions of random space. For some problems not all of the sources of uncertainty are equally important. This means that some directions in random space need more refinement than others. So, one needs to be able to identify correctly these directions and allocate accordingly the available computational resources. 

In \cite{Stinis09}, one of the current authors proposed a novel algorithm for performing mesh refinement in physical space by using a reduced model. The algorithm is based on the observation that the need for mesh refinement is dictated by the cascade of activity to scales smaller than the ones resolved (depending on the physical context this could mean a mass or an energy cascade). A good reduced model should be able to effect with accuracy the necessary transfer of activity across scales. Thus, a good reduced model can be used to decide when to refine. 

What is needed to define a mesh refinement algorithm is a criterion to determine whether it is time to perform mesh refinement. In \cite{Stinis09}, this criterion was based on monitoring the rate of change of the $L_2$ norm of the solution at the resolved scales as computed by the reduced model (note that the $L_2$ norm corresponds to the mass or energy in many physical contexts). When this rate of change exceeds a prescribed tolerance the algorithm performs mesh refinement. The suitability of the rate of change of the $L_2$ norm as an indicator for the need to refine is shown in Appendix B. In particular, we show that the expression for the rate of change of the $L_2$ norm for the resolved scales has the same functional form as the expression for the rate of change of the $L_2$ error of the reduced model. Thus, by keeping, through mesh refinement, the rate of change of the $L_2$ norm for the resolved scales under a prescribed tolerance, we can keep the error of the calculation under control (see Section 3 for more details). 

The paper is organized as follows. In Section 2, we recall the framework for the stochastic Galerkin formulation of a random system. The proposed mesh refinement algorithm is presented in Section 3.  Section 4 contains numerical results from the application of the algorithm. Conclusions are drawn in Section 6. Finally, Appendix A contains the Galerkin formulation of the Kraichnan-Orszag system as well as the reduced model used in the mesh refinement algorithm. Appendix B contains a proof of convergence of the reduced model.    


\section{gPC representation of uncertainty} \label{sec:gPC}
Let $(\O,\A,\P)$ be a probability space, where $\O$ is the event space and $\P$ is the probability measure defined on the $\sigma-$ algebra of subsets of $\O$. Let $\bm{\x} = (\x_1,\cdots,\x_d)$ be a $d$-dimensional random vector for the random event $\o\in\O$. Without loss of generality, consider an orthonormal generalized polynomial chaos basis $\{\Phi_\i\}_{|\i|=0}^{\infty}$ spanning the space of second-order random processes on this probability space ($\i = (i_1,\cdots,i_d)\in \mathbb{N}^d_0$ is a multi-index with $|\i|=i_1+\cdots+i_d.$) The basis functions $\Phi_\i(\bm{\x}(\o))$ are polynomials of degree $|\i|$ with orthonormal relation
\begin{equation}\label{orth_basis}
\langle\Phi_\i,\Phi_{\j}\rangle = \delta_{\i\j},
\end{equation}
where $\delta_{\i\j}$ is the Kronecker delta and the inner product between two functions $f(\bm{\x})$ and $g(\bm{\x})$ is defined by
\begin{equation}\label{inner}
\langle f(\bm{\x}),g(\bm{\x})\rangle=\int_{\O}f(\bm{\x})g(\bm{\x})d\P(\bm{\x}).
\end{equation}
A general second-order random process $u(\o)\in L_2(\O,\A,\P)$ can be expressed by gPC as
\begin{equation}\label{gPC_expan}
u(\o) = \sum_{|\i|=0}^\infty u_\i\Phi_{\i}(\bm{\x}(\o)),
\end{equation}
The mean and variance of $u(\o)$ can be expressed independently of the choice of basis as
\begin{equation}\label{mu_and_var}
\E(u(\bm{\x})) = u_\mathbf{0}, \quad \text{Var}(u(\bm{\x})) = \sum_{|\i|=0}^\infty u_{\i}^2,
\end{equation}
respectively.
For numerical implementation, \eqref{gPC_expan} is truncated to a finite number of $n$ terms, and we set
\begin{equation}\label{gPC_expan_fi}
u(\o) = \sum_{|\i|=0}^p u_\i\Phi_{\i}(\bm{\x}(\o)),
\end{equation}
where $p$ is the highest order of the polynomial bases and $n = \left(\begin{array}{c}p\\d+p\end{array}\right)$.

\subsection{gPC Galerkin method for stochastic differential equations}
Consider the following stochastic differential equation
\begin{equation}\label{sto_eq}
u_t(\mathbf{x},t;\o) = \mathcal{L}(\mathbf{x},t,\o;u),
\end{equation}
where $u:=u(\mathbf{x},t;\o)$ is the solution. Operator $\mathcal{L}$ usually involves differentiations in space and can be nonlinear. Appropriate initial conditions and boundary conditions sometimes involving random parameters are assumed. The solution $u$ can be approximated by the truncated gPC expansion
\begin{equation}\label{gpc_solu}
u(\mathbf{x},t;\o) = \sum_{|\i|=0}^p\hat{u_\i}(\mathbf{x},t)\Phi_\i(\bm{\x}(\o)).
\end{equation}
Substituting equation \eqref{gpc_solu} into the governing system \eqref{sto_eq}, we obtain the following system
\begin{equation}\label{sto_eq_gpc}
\sum_{|\i|=0}^p\frac{\partial\hat{u_\i}}{\partial t}\Phi_\i=\mathcal{L}(\mathbf{x},t,\o;\sum_{|\i|=0}^p \hat{u}_\i\Phi_\i),
\end{equation}
By applying Galerkin projection of \eqref{sto_eq_gpc} onto each element of the orthonormal polynomial basis $\{\Phi_\i\}_{|\i|=0}^p$, we derive
\begin{equation}\label{sto_eq_galerkin}
\frac{\partial\hat{u_\i}}{\partial t}=\langle\mathcal{L}(\mathbf{x},t,\o;\sum_{|\i|=0}^p \hat{u}_\i\Phi_\i),\Phi_\j\rangle ,\quad |\j| = 0,1,\cdots,p.
\end{equation}
This is a set of $n$ coupled deterministic equations the random modes $\hat{u}_\i(\mathbf{x},t),|\i| = 0,1,\cdots,p.$ Techniques for deterministic equations can be implemented to solve this system of equations

For smooth problems, the gPC expansion is efficient due to its exponential convergence. However, for non-smooth problems such as nonlinear problems involving discontinuities in random space, gPC expansions extremely slow. gPC expansions can fail to capture the statistical properties of the solution after a short time \cite{WanK_JCP05}.

\subsection{Multi-element gPC representation}
An alternative to global gPC representation for efficiently resolving problems with discontinuities in random space is gPC based on a localization of the random space. These localization methods include, among others, multi-element generalized polynomial chaos(ME-gPC) \cite{WanK_JCP05}, multi-element stochastic collocation \cite{Foo2008,Foo2010,Jakeman2013}, adaptive hierarchical sparse grid collocation \cite{Ma2009} and piecewise polynomial multi-wavelets expansion \cite{MaitreKNG_JCP04,MaitreNGK_JCP04,Pettersson2014}. 

In this paper, we adopt the ME-gPC approach to deal with discontinuities in random space. We briefly introduce the decomposition of the uniform random space (see \cite{WanK_JCP05} for more details).
Let $\bm{\xi} = (\xi_1(\o),\xi_2(\o),\cdots,\xi_d(\o)):\O\longmapsto\mathbb{R}^d$ be a $d$-dimensional random vector defined on the probability space $(\O,\A,\P)$, where $\xi_i,i=1\cdots,d$ are identical independent distributed(i.i.d) uniform random variables defined as $\xi_i:\O\longmapsto[-1,1]$ with probability density function(p.d.f) $f_i=\frac{1}{2}$.
Let $B=[-1,1]^d\subset\mathbb{R}^d$ be decomposed in $N$ non-overlapping rectangular elements as following:
\begin{eqnarray}\label{decom}
&&B_k = [a_1^k,b_1^k)\times[a_2^k,b_2^k)\times\cdots\times[a_d^k,b_d^k],\nonumber\\
&&B = \bigcup_{k=1}^N B_k,\\
&&B_i\cap B_j = \emptyset \quad \text{if } i\neq j,\nonumber
\end{eqnarray}
where $i,j,k = 1,2,\cdots,N$. Let $\chi_k,k=1,2,\cdots,N$ be the indicator random variables on each of the elements defined by
\[
\chi_k = \left\{\begin{array}{ll}1&\text{if }\bm{\xi}\in B_k,\\
0&\text{otherwise.}
\end{array}\right.
\]
$\bigcup_{k=1}^N\chi_k^{-1}(1)$ gives a decomposition of the event space $\O$. For each random element, the local random vector is defined by
\[
\bm{\zeta}^k = (\z_{1}^k,\z_{2}^k,\cdots,\z_d^{k}):\chi_k^{-1}(1)\longmapsto B_k
\]
subject to the conditional p.d.f
\[
f_{\bm{\z}^k} = \frac{1}{2^d\Pb(\chi_k=1)}, \quad k=1,2,\cdots,N,
\]
where $\Pb(\chi_k=1) = \prod_{i=1}^d\frac{b_i-a_i}{2}$.
After that, we transfer each $\bm{\z}^k$ to a new random vector defined on $[-1,1]^d$ by a map $g_k$,
\[
g_k(\bm{\z}^k):\z^k_i = \frac{b_i^k-a_i^k}{2}\z_i^k+\frac{b_i^k+a_i^k}{2},\quad i=1,2,\cdots,d.
\]
Thus,
\[
\bm{\xi}^k = g_k(\bm{\z}^k) = (\z_1^k,\z_2^k,\cdots,\z_d^k):\chi_k^{-1}(1)\longmapsto[-1,1]^d
\]
is the new random vector with constant p.d.f $f^k = \frac{1}{2^d}$.
With this decomposition of the random space of $\bm{\xi}$, we can solve a system of differential equations with random input $\bm{\xi}$ by combining the local approximation via $\bm{\z}^k$ subject to a conditional p.d.f.
In practice if the system solution $u(\bm{\xi})$ is locally approximated by $\hat{u}_k(\bm{\z}^k),k=1,2,\cdots,N$, then the $m$th moment of $u(\bm{\xi})$ on the entire random space can be obtained by
\begin{equation}\label{m_moment}
\begin{split}
\mu_m(u(\bm{\xi})) &= \int_{B}u^m(\bm{\xi})\frac{1}{2^d}d\bm{\xi}\\
&\approx\sum_{k=1}^N\Pb(\chi_k=1)\int_{B_k}\hat{u}_k^m(\bm{\z}^k)f_{\bm{\z}^k}d\bm{\z}^k\\
&=\sum_{k=1}^N\Pb(\chi_k=1)\int_{[-1,1]^d}\hat{u}_k^m(g_k^{-1}(\bm{\xi}^k))\frac{1}{2^d}d\bm{\xi}^k
\end{split}
\end{equation}


\section{Model reduction and mesh refinement}
This section contains a brief introduction to the main idea behind model reduction and how this can be used to construct a mesh refinement algorithm.

\subsection{Model reduction} \label{sec:mo_reduc}
The Galerkin projection of the stochastic system \eqref{sto_eq} onto the random space transforms it into a deterministic system of coupled equations \eqref{sto_eq_galerkin}. This deterministic system consists, in general, of partial differential equations (PDEs). After spatial discretization the PDEs are replaced by a system of ordinary differential equations(ODEs). This is our starting point for a reduced model.

%
We split the set of the random modes into two sets, $F$ of resolved modes and $G$ of unresolved ones. Note that this is an internal splitting of the modes of the system. In what follows we always evolve the total set of modes $F \cup G.$ The main idea behind model reduction is to construct a modified system for the evolution of the modes in $F$ using the modes in $G$ to effect the necessary transfer of activity between $F$ and $G.$ 

One can construct a reduced model for the modes in $F$, for example, by using the Mori-Zwanzig formalism \cite{ChorinHK00}. Let $U = (\{\hat{u}_\i\}), \i\in F\cap G$ be the vector of all random modes. The system of ODEs for their evolution can be written as
\begin{equation}\label{ode_R}
\frac{d U(t)}{dt} = R(t, U(t)),
\end{equation}
where $R(t,U(t))$ is the appropriate right hand side (RHS) after all the necessary discretizations. Let $\hat{U}$ denote the vector of resolved modes and $\t{U}$ denote the vector of unresolved modes. Similarly, the RHS denoted by $R(t,U) = (\hat{R}(t,U),\t{R}(t,U))$. Model reduction constructs a modified system for the evolution of the modes in $\hat{U}$ which should follow accurately these modes without having to solve for the full system. Inevitably, the modified system contains an approximation of the dynamics of the unresolved modes $\tilde{U}.$ However, a good reduced model will capture accurately the transfer of activity between the modes in $\hat{U}$ and $\tilde{U}.$ It is this property of a good reduced model that we will exploit in constructing our mesh refinement algorithm.

\subsection{The mesh refinement algorithm}\label{sec:method}
Consider a system of equations with dependence on some random parameters. We decompose the random space in elements as in \eqref{decom}. For each element we consider a system of equations as in \eqref{ode_R} resulting from a gPC expansion of the solution within this element. The associated $L_2$ norm for the modes in $F$ only is $\hat{E} = \sum_{\k\in F}|\hat{u}_{\k}|^2.$ We construct a reduced model for the modes in $F$ which is given by a {\it new} system of equations

\begin{equation}\label{ode_R_reduced}
\frac{d \hat{U'}(t)}{dt} = \hat{R}'(t, U'(t)).
\end{equation}
Note that the RHS will be different from the RHS of the equations for $\hat{U}$ in \eqref{ode_R}. The associated $L_2$ norm for the modes in $F$ is $\hat{E}' = \sum_{\k\in F}|\hat{u}'_{\k}|^2.$ We can define such an $L_2$ norm for each element in the random space. We monitor $|\frac{d\hat{E}'}{dt}|$ that is, the absolute value of the rate of change of the quantity $\hat{E}'$ in each element. When this exceeds a prescribed tolerance we stop and refine.  


If the random space has $d$ dimensions with $d \ge 2$ then we need to decide not only when and where it is time to refine but also in which direction. Since the modes of the polynomial basis with highest degree contribute most to the transfer of activity from $F$ to $G,$ we define $s_i = |\frac{d|\hat{u}'_{p_r\mathbf{e}_i}|^2}{dt}|$ with $i=1,\ldots,d$ to denote the contribution of the $i$th random dimension to $|\frac{d\hat{E}'}{dt}|.$ Here, $p_r\mathbf{e}_i$ is the index vector with the highest degree $p_r$ in $i$th-dimension and degree zero in the rest of the dimensions.

\vspace{1cm}

\begin{enumerate}
\item[]{\bf Mesh refinement algorithm}
\item[Step 1] Choose values $TOL_1>0$ and $TOL_2>0$ for the tolerances.
\item[Step 2] Mesh refinement:\\
For time step $t \leftarrow 1, \cdots, N$ \\
\begin{itemize}
\item[] Loop over all elements:
\begin{itemize}
\item[] On the $k$-th element $B_k$, update the modes for $B_k$\\
 If{$(|\frac{d\hat{E}'}{dt}|)\Pr(B_k)\geq TOL_1$,} loop over all dimensions:
\begin{itemize}
\item[] If {$s_i\geq TOL_2\cdot \max_{j=1,\cdots, d} s_j$,}
\begin{itemize}
\item[] split the element $B_k$ in two equal parts along the $i$th dimension and generate local random variables $\xi_{i,1}$ and $\xi_{i,2}$
\end{itemize}
\item[] End if
\end{itemize}
\item[] End if
\item[] Update the information of the new elements
\end{itemize}
\item[] End loop
\end{itemize}
\end{enumerate}

\vspace{1cm}

We should make a few remarks about the algorithm. First, we do not need to compute the rate of change by numerical differentiation which is inaccurate. Instead, by using the RHS of the equations \eqref{ode_R_reduced} we obtain an expression for the rate of change which does not involve temporal derivatives (see Appendix A for the relevant expressions for the Kraichnan-Orszag system).

Secondly, for each element we monitor $(|\frac{d\hat{E}'}{dt}|)\Pr(B_k)$ and not just $|\frac{d\hat{E}'}{dt}|.$ This is because each element should be weighted appropriately so that there is no excessive refinement for elements whose contribution is negligible (see \eqref{m_moment}). 

Thirdly, there are two ways to compute $|\frac{d\hat{E}'}{dt}|.$ One way is to evolve, for each element $B_k$, both the full and reduced systems of equations \eqref{ode_R} and \eqref{ode_R_reduced}. One then uses the values of the modes in $F$ from the reduced system to compute the expressions involved in $|\frac{d\hat{E}'}{dt}|.$ The second way is to evolve {\it only} the full system \eqref{ode_R} and then use the values of the modes in $F$ to compute the expressions  in $|\frac{d\hat{E}'}{dt}|.$


\section{Numerical Examples} \label{sec:examples} We present results of our mesh refinement algorithm for a simple linear ODE with a random parameter and for the Kraichnan-Orszag three-mode system with random initial conditions.

\subsection{One-dimensional ODE}
We begin by considering the simple ODE
\begin{equation}\label{ex:ODE}
\frac{du}{dt} = -\kappa(\omega)u,\quad u(0;\omega) = u_0,
\end{equation}
where $\kappa(\omega)\sim U(-1,1)$. The exact solution of this equation is
\begin{equation}\label{ex:ODE_sol}
u(t,\omega) = u_0e^{-\kappa(\omega)t}.
\end{equation}
The statistical mean of the solution is $$\mu(u(t;\o)) = \{\begin{array}{ll} \frac{u_0}{2t}(e^{t}-e^{-t}),& t>0\\u_0,&t=0\end{array},$$ and the variance is $$\sigma^2(u(t;\o))=\{\begin{array}{ll}\frac{u_0^2}{4t}(e^{2t}-e^{-2t})-\frac{u_0^2}{4t^2}(e^{2t}+e^{-2t}-2),&t>0\\0,& t=0\end{array}$$
Assume $$ \tilde{u}=  \sum_{i=0}^N\tilde{u}_i\Phi_i(\xi(\o)),\quad \kappa(\o) = \sum_{i=0}^N \kappa_i \Phi(\xi(\o))$$ where $\{\Phi_i\}$ are the orthonormal Legendre polynomial chaos bases and $\xi(\o)\sim U(-1,1)$. The coefficients of the gPC expansion satisfy the ODE system
\begin{equation}\label{ex:ODE_galer}
\frac{\tilde{u}_k}{dt} = -\sum_{i=0}^N\sum_{j=0}^N u_0\kappa_i \tilde{u}_je_{ijk}, \quad k=0,\cdots,N,
\end{equation}
where $$e_{ijk} = \int_{\Omega}\Phi_i(z)\Phi_j(z)\Phi_k(z)\rho(z)dz.$$ The solution $\tilde{u}$ is an approximation of $u$.
To implement the adaptive mesh refinement, first we need to construct a reduced model. We have chosen the $t$-model which was originally derived through the Mori-Zwanzig formalism and has been thoroughly studied \cite{ChorinHK02,HaldS07}. For the system \eqref{ex:ODE_galer}, the $t$-model reads

\[
\begin{split}
\frac{d\tilde{u}_k}{dt} =& - u_0\sum_{i\in F\cup G}\sum_{j\in F}\kappa_i \tilde{u}_je_{ijk}\\
&+tu_0^2\sum_{i\in F\cup G}\sum_{j\in G}\sum_{s\in F\cup G}\sum_{t\in F}\kappa_i\kappa_s e_{ijk}e_{stj}\tilde{u}_{t}, \quad k \in F
\end{split}
\]

where $F = \{0,1,\cdots,p_r\}$ is the set of indices of the resolved modes, and $G = \{p_r+1,\cdots,p_f\}$ is the set of indices of the unresolved modes. We study the evolution of the mean and the variance of the solution to the system. There is no discontinuity involved in this problem. However as time increases the lower ordered gPC solution starts to deviate from the exact solution. One way to keep the error under control is to increase the order of the gPC expansion. Another way is to divide the random space into smaller elements so that a lower order expansion is adequate. For our adaptive mesh refinement algorithm we have chosen a low order gPC expansion for each element. The price one pays is the need to solve more small systems instead of a large one. Figure \ref{fig:ode} shows the curves of the mean(left) and variance(right) via various methods. The results from the refinement algorithm almost reproduce the exact solution while the global gPC solution starts departing from the exact solution as time evolves. We choose the mean and variance of the exact solution as references to study the relative error of each algorithm. We define
$$\textrm{Error of mean} = |\frac{\mu(u(t;\o))-\mu(\tilde{u}(t;\o))}{\mu(u(t;\o))}|,$$
$$\quad\textrm{Error of variance} = |\frac{\sigma^2(u(t;\o))-\sigma^2(\tilde{u}(t;\o))}{\sigma^2(u(t;\o))}|.$$
In Figure \ref{fig:ode_er}, the evolutions of the error of the gPC solution with order 3 and ME-gPC are shown for different values of the accuracy control parameters and different orders of the reduced models. The maximum relative errors for the mean and the variance resulting from the use of reduced models of different orders are presented in Table \ref{tab:ode}. For reasons of comparison we include the relative error from the gPC solution of order 3. As we can see higher ordered reduced models require less elements and at the same time obtain better accuracy. The adaptive meshes at $t=10$ with different accuracy control values $TOL_1=10^{-1}$ and $TOL_1 = 10^{-2}$ are demonstrated in Figure \ref{fig:ode_mesh}. The elements on the left end are smaller than those on the right end. This is consistent with the fact that the rate of change of $u(t;\o)$ is larger on the left end of the random space. The evolution of the error, of the number of elements and of the error for the variance are shown in Figure \ref{fig:ode_error_mesh}.
\begin{figure}[htbp]
   \centerline{
   \psfig{file=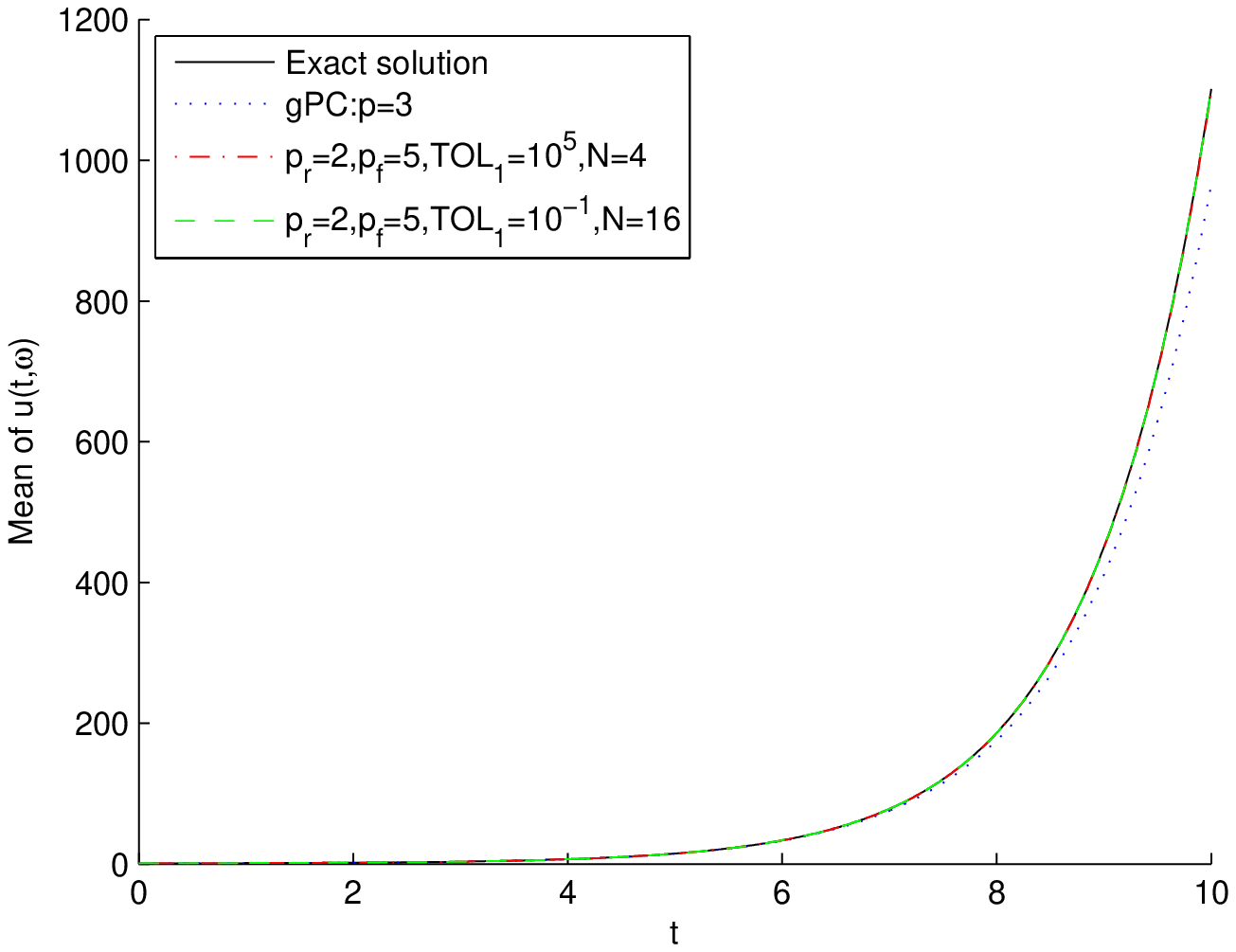,width=7cm}
   \psfig{file=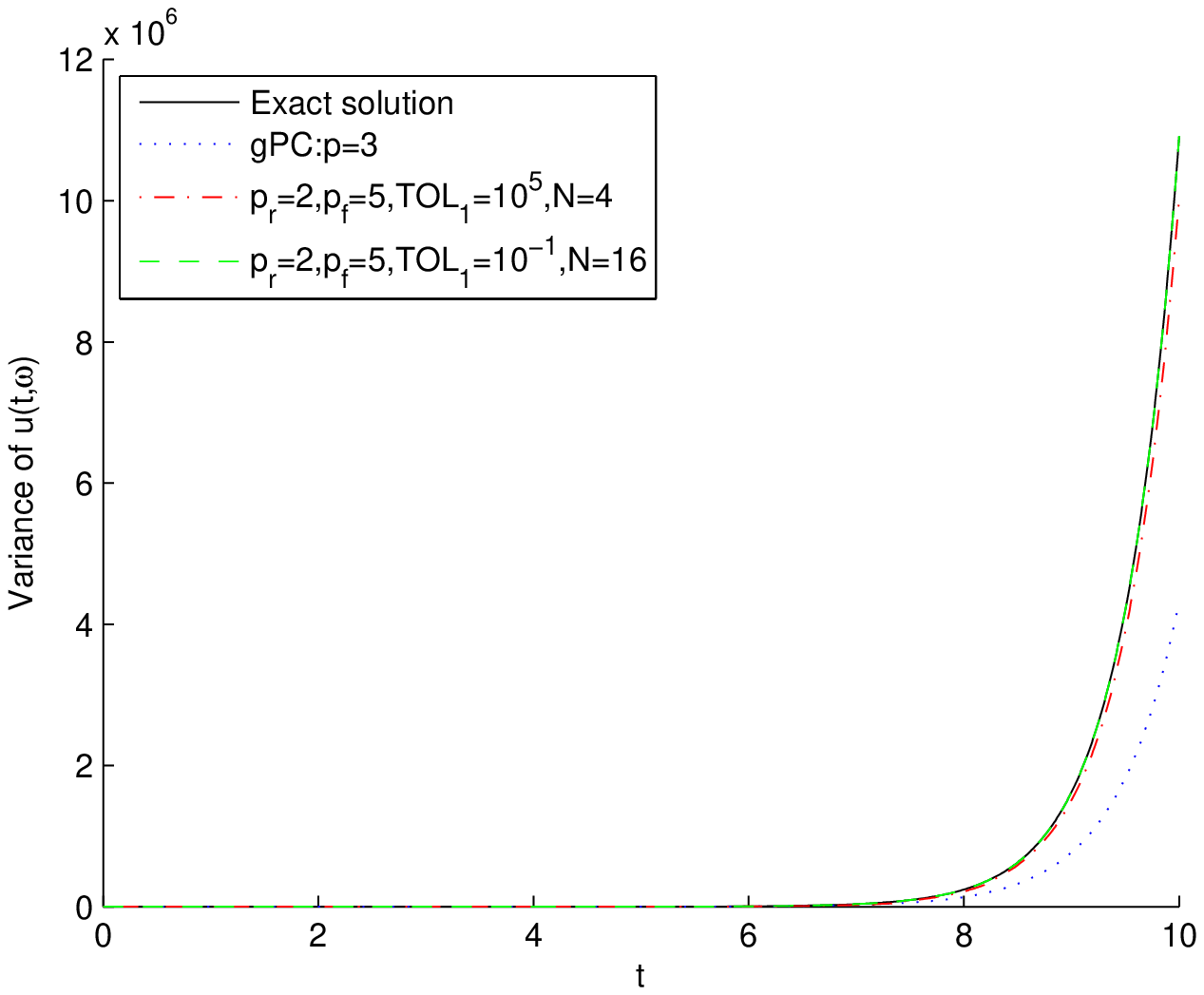,width=7cm}
  }
\caption{Evolution of mean of $u(t;\o)$(left) and evolution of variance of $u(t;\o)$(right) for the simple ODE.
  }
\label{fig:ode}
\end{figure}

\begin{figure}[htbp]
   \centerline{
   \psfig{file=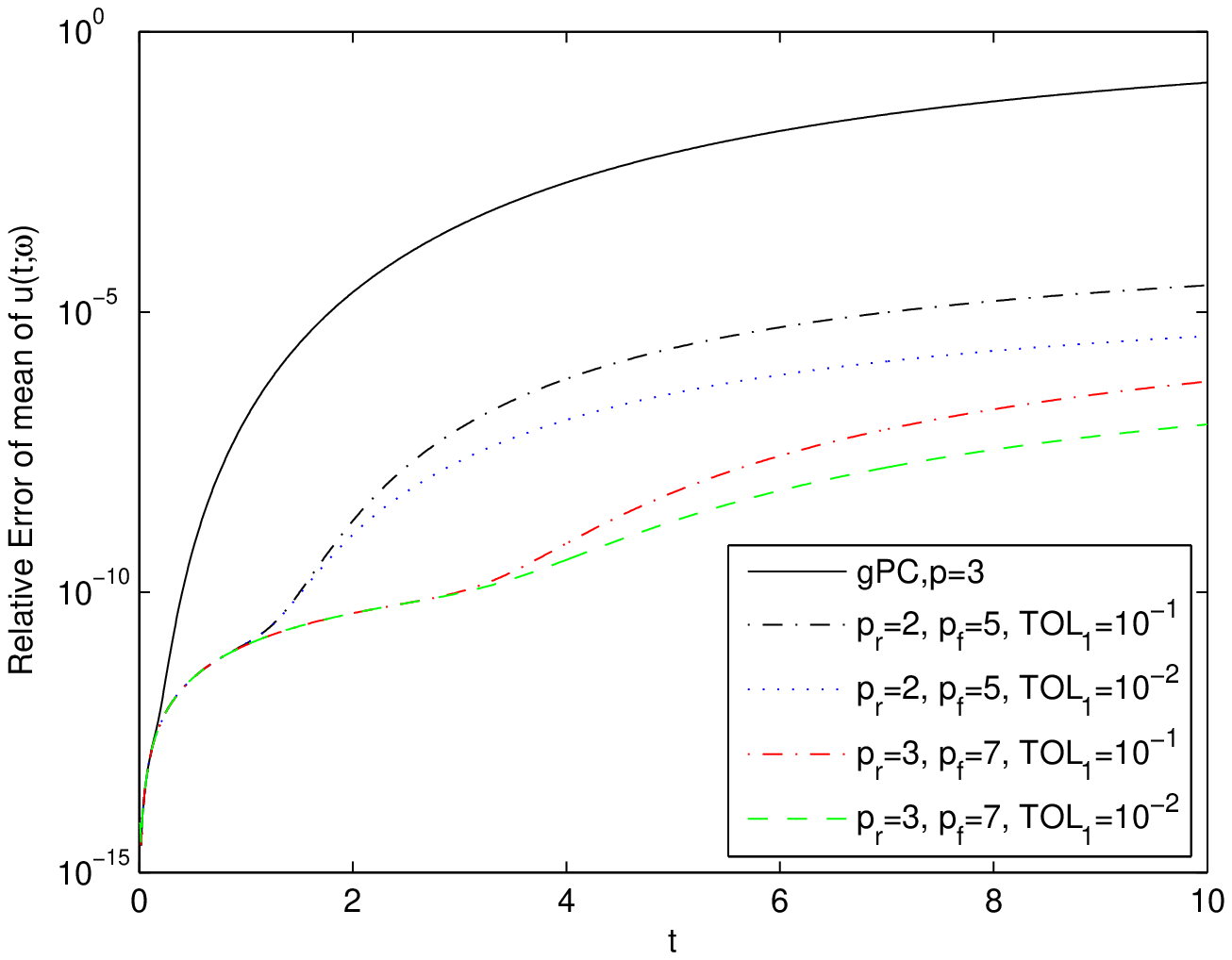,width=7cm}
   \psfig{file=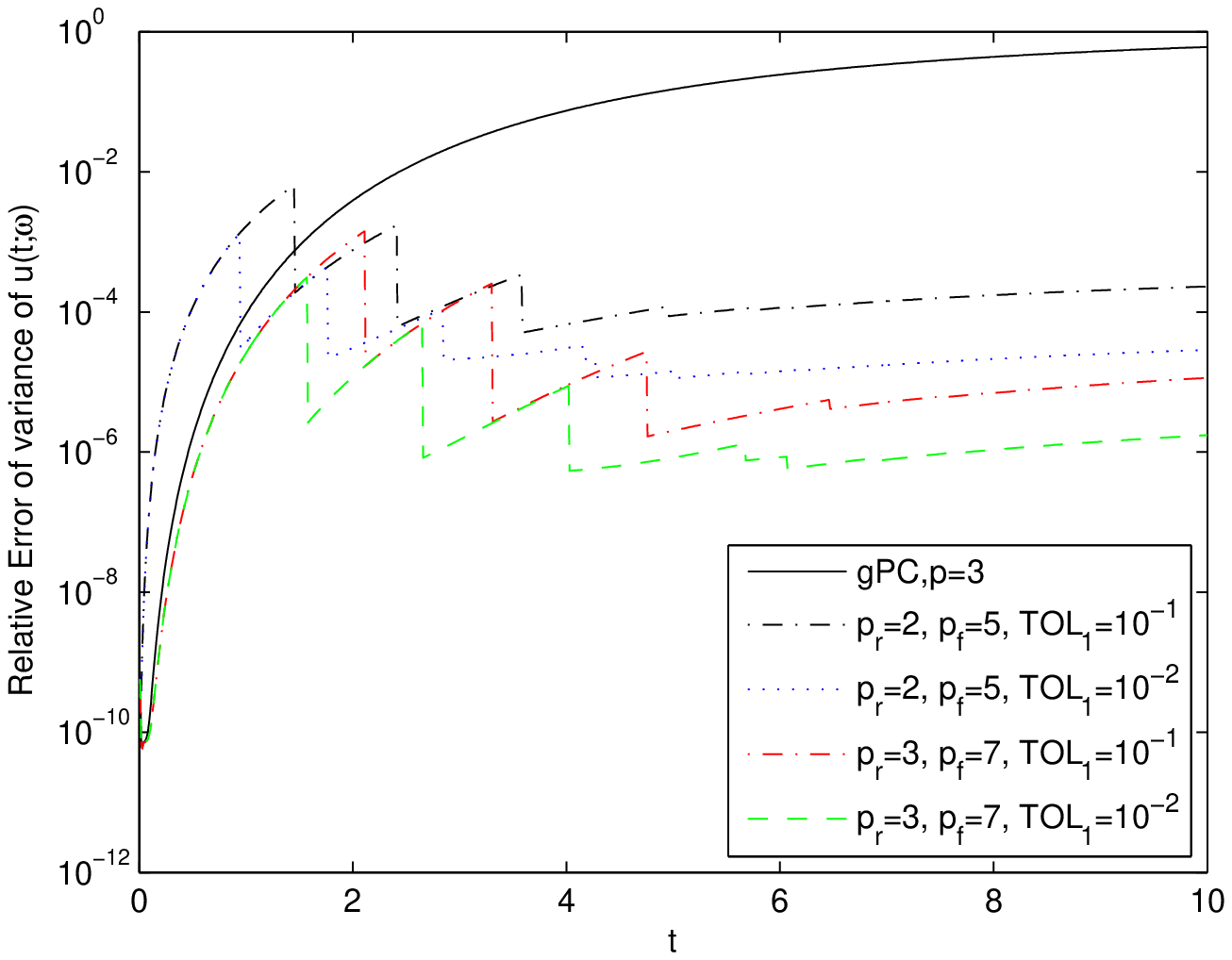,width=7cm}
  }
\caption{Evolution of relative error of mean of $u(t;\o)$(left) and evolution of relative error of variance of $u(t;\o)$(right) for the simple ODE.
  }
\label{fig:ode_er}
\end{figure}

\begin{table} [htbp]
\begin{center}
\begin{tabular}{llccc}
\toprule
        & N & Error of $\mu(u)$ & Error of $var(u)$\\
\midrule
gPC, $p=5$& $1$ & $3.8e-3$ & $1.1e-1$\\
\midrule
$TOL_1=10^{-1}$\\
$p_r=2,p_f=5$ & $16$ & $3.0e-5$ & $6.4e-3$\\
$p_r=3,p_f=7$ & $9$ & $5.7e-7$ & $1.4e-3$\\
\midrule
$TOL_1=10^{-2}$\\
$p_r=2,p_f=5$ & $23$ & $3.7e-6$ & $1.3e-3$ \\
$p_r=3,p_f=7$ & $12$ & $9.8e-8$ & $3.1e-4$ \\
\bottomrule
\end{tabular}
\caption{Maximum of relative error of mean and variance of solution to the simple ODE when $t\in(0,10]$}
\label{tab:ode}
\end{center}
\end{table}

\begin{figure}[htbp]
   \centerline{
   \psfig{file=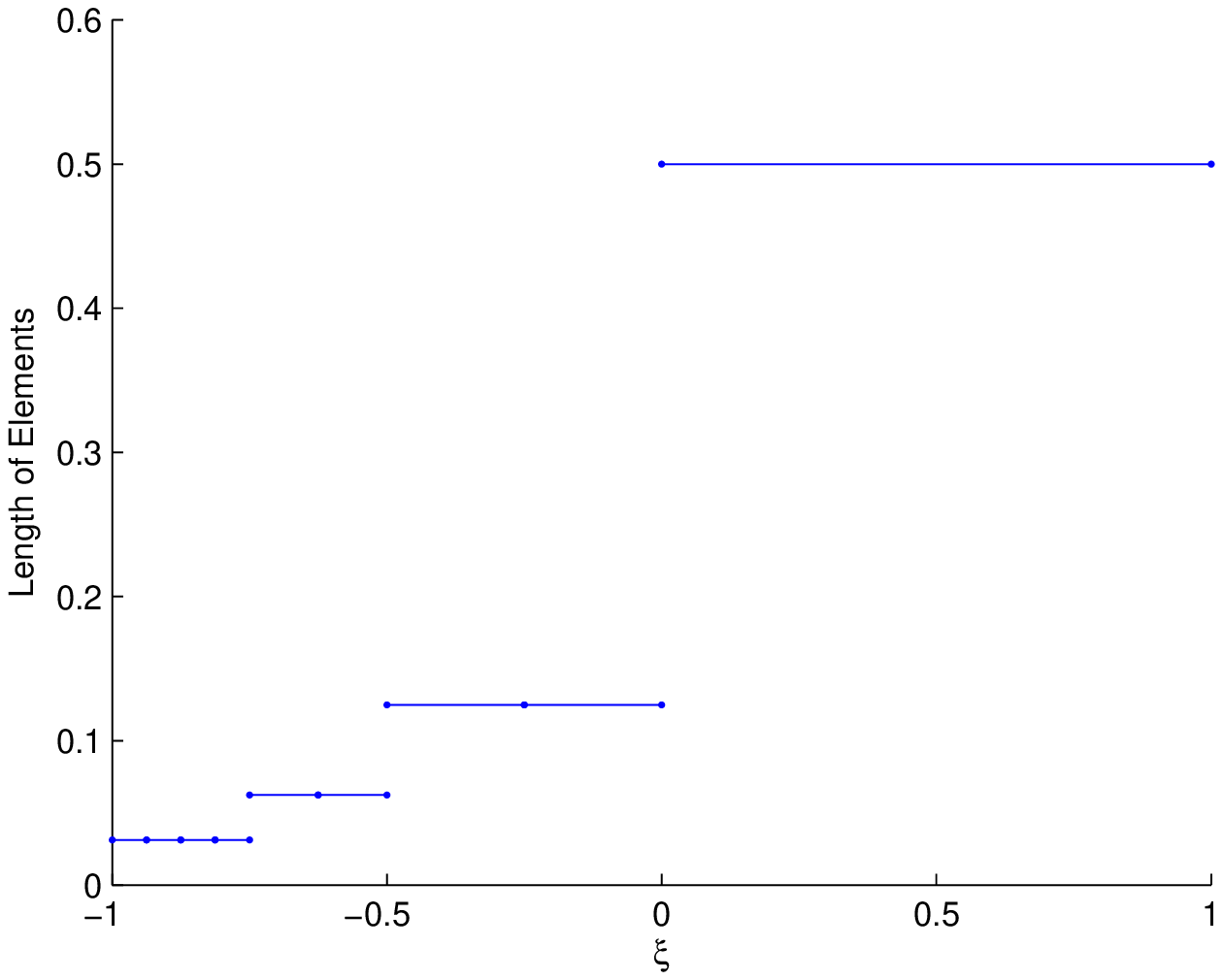,width=7cm}
   \psfig{file=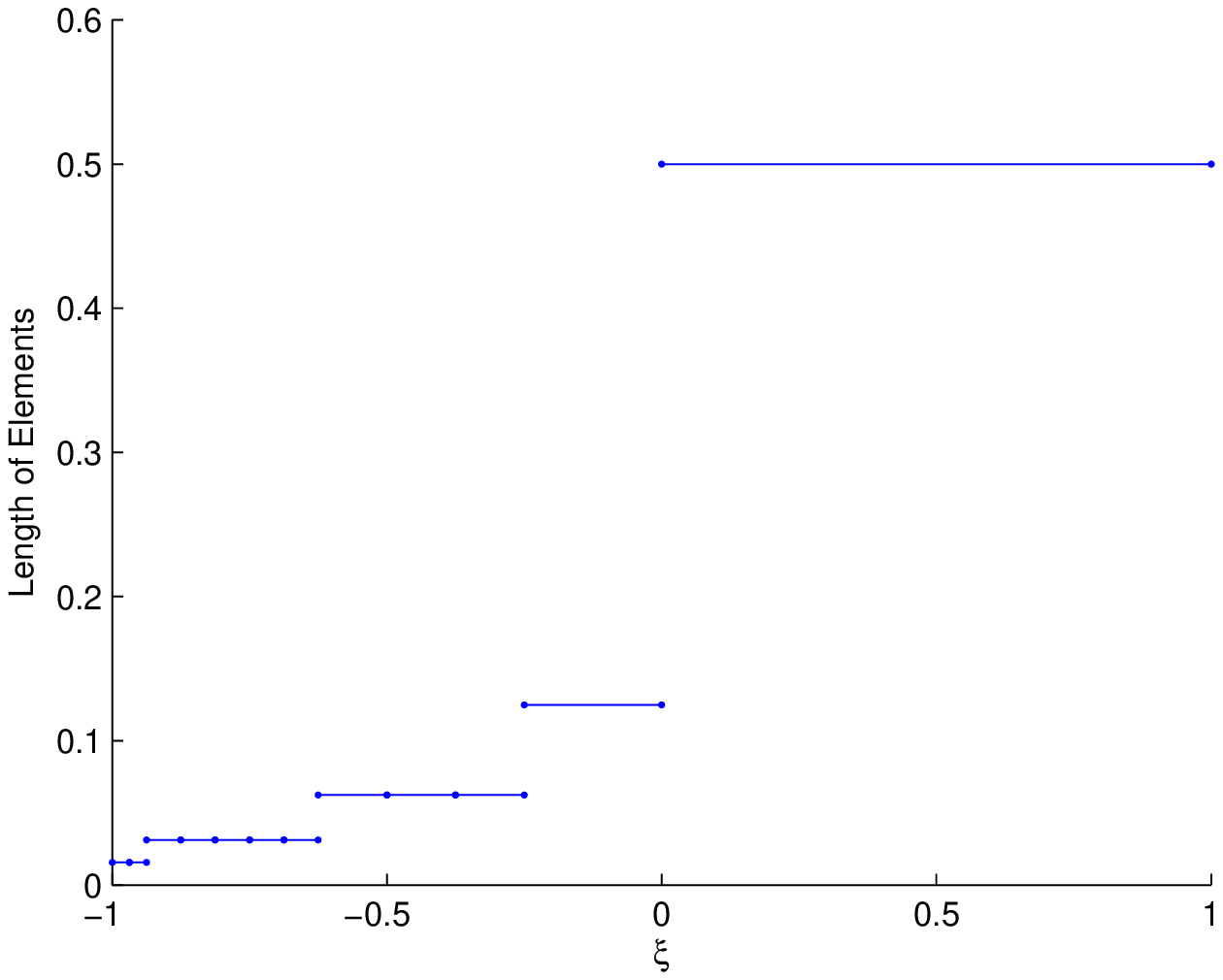,width=7cm}
  }
\caption{Adaptive meshes for the simple ODE when $p_r=3,p_f=7$, $TOL_1=10^{-1}$(left) and $TOL_1=10^{-2}$(right).
  }
\label{fig:ode_mesh}
\end{figure}

\begin{figure}[htbp]
   \centerline{
   \psfig{file=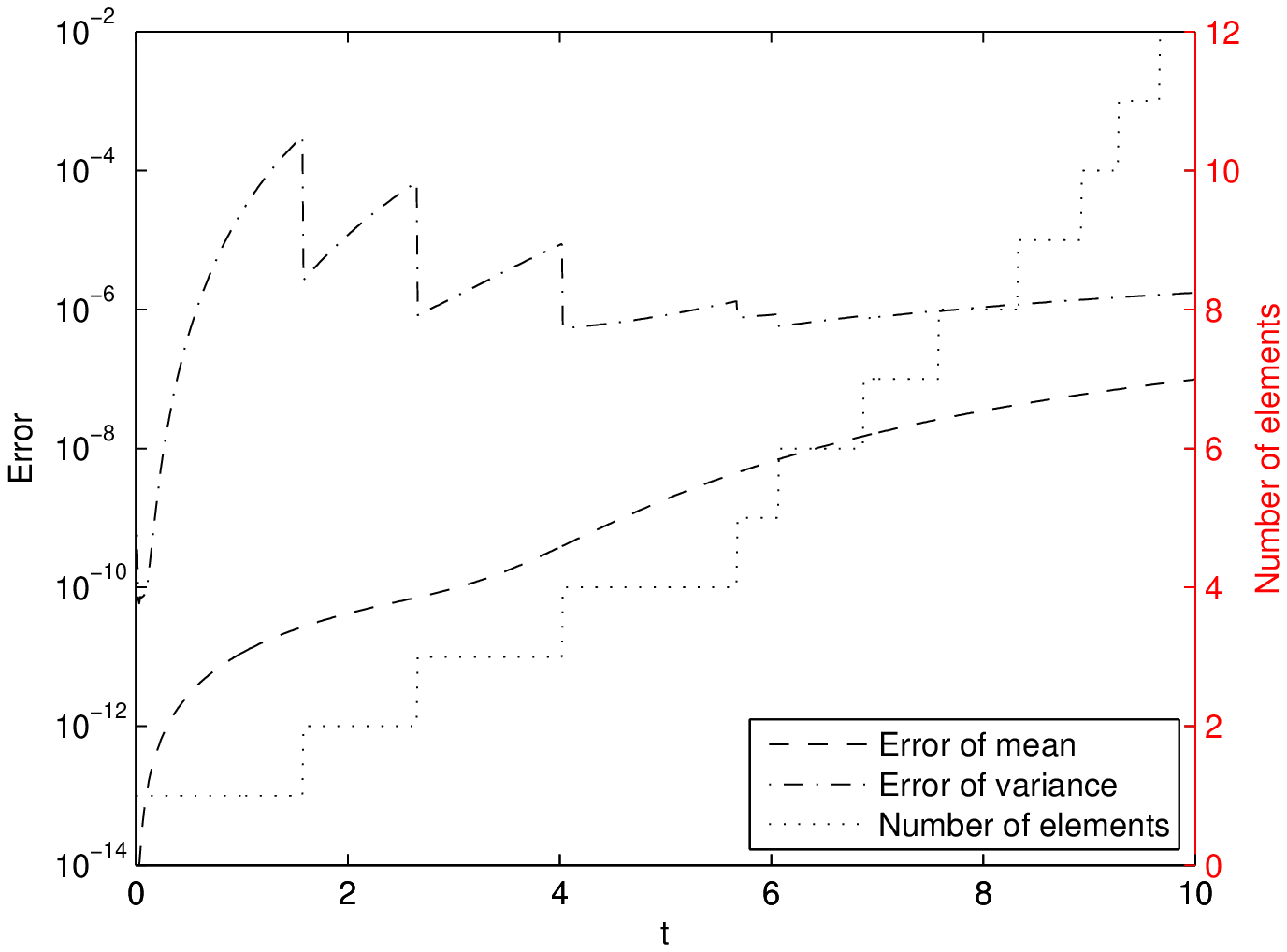,width=11cm}
  }
\caption{Evolutions of mean error and variance error compared with adaptive meshes for the simple ODE when $p_r=3,p_f=7$, $TOL_1=10^{-2}$.
  }
\label{fig:ode_error_mesh}
\end{figure}

\subsection{The Kraichnan-Orszag three-mode system}
In \cite{Orszag1967} it was shown that a Wiener-Hermite expansion does not faithfully represent the dynamics of the system when the random inputs are Gaussian random variables. A mesh refinement algorithm can efficiently quantify the uncertainty of the system when the random inputs are uniform random variables \cite{WanK_JCP05}. For computational convenience, we consider the following system obtained by a linear transformation performed on the original Kraichnan-Orszag three-mode system.
\begin{equation}\label{ex:KO}
\begin{split}
\frac{dy_1}{dt} &= y_1y_3,\\
\frac{dy_2}{dt} &= -y_2y_3,\\
\frac{dy_3}{dt} &= -y_1^2+y_3^2,
\end{split}
\end{equation}
with the initial conditions
\begin{equation}\label{ex:KO_ini}
y_1(0) = y_1(0;\omega),\quad y_2(0) = y_2(0;\omega),\quad y_3(0) = y_3(0;\omega),
\end{equation}
The discontinuity occurs at the planes $y_1 =0$ and $y_2=0.$ Similarly to \cite{WanK_JCP05}, we consider the case with one random input, two random inputs and three random inputs respectively. Both the Galerkin ODEs and the reduced model of the Kraichnan-Orszag three-mode system are derived in \ref{app:gpc}.

\subsubsection{One-dimensional random input}
We choose the initial conditions
\begin{equation}\label{ex:KO_ini_1d}
y_1(0;\o) = 1,\quad y_2(0;\o) = 0.1\xi(\o),\quad y_3(0;\omega)=0,
\end{equation}
where $\xi\sim U[-1,1]$. In this case the discontinuity point $y_2=0$ is in the random input space.

\begin{figure}[htbp]
   \centerline{
   \psfig{file=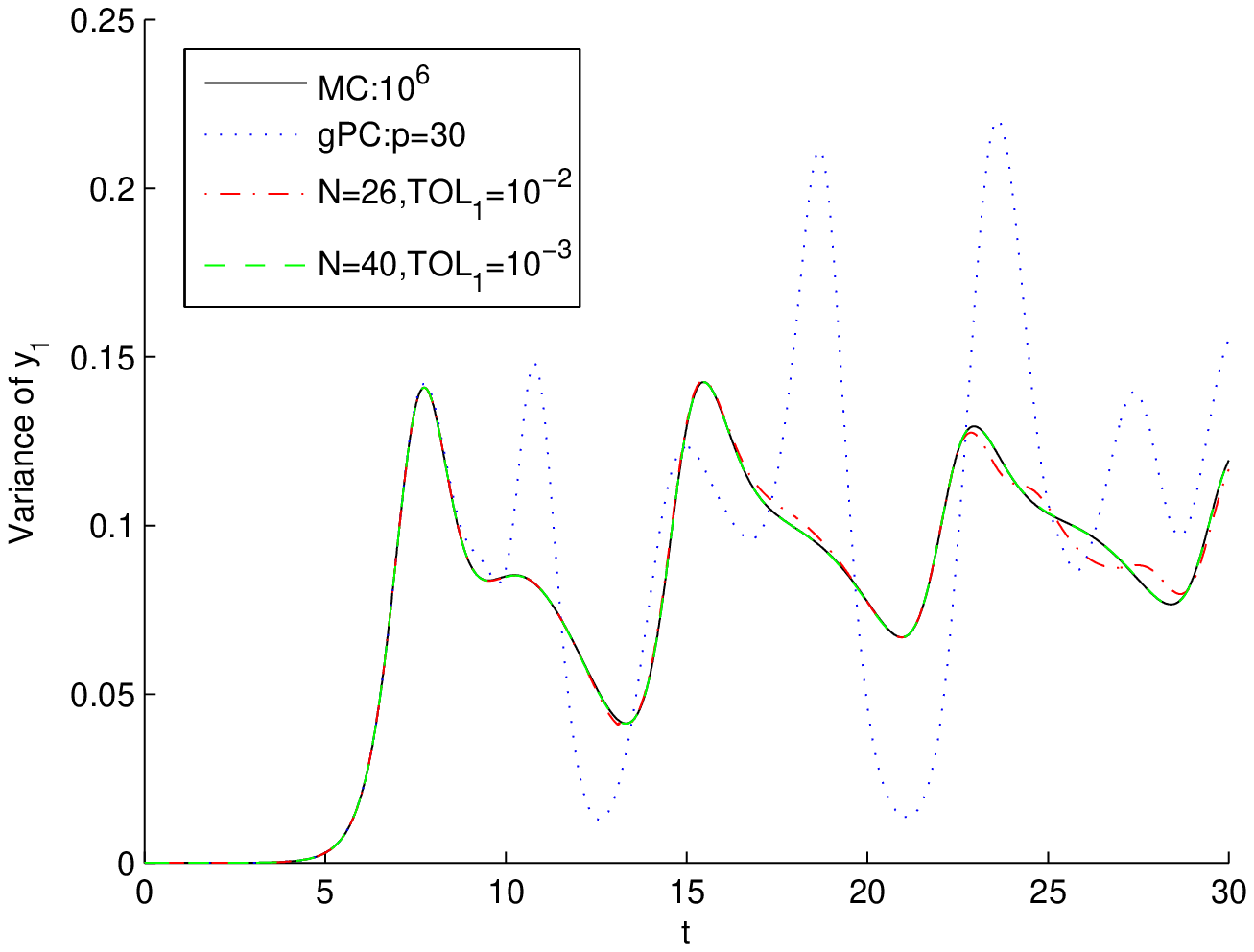,width=7cm}
   \psfig{file=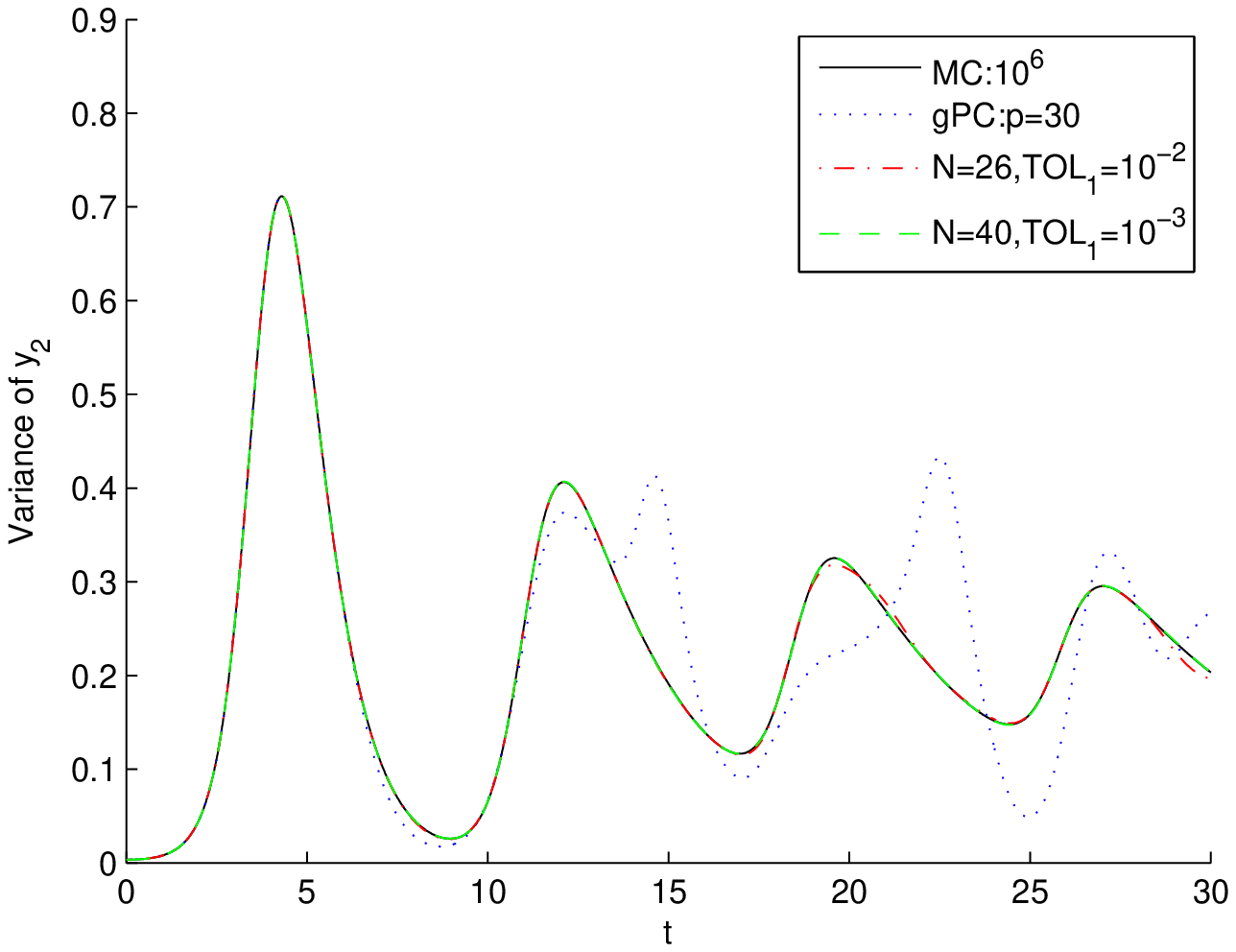,width=7cm}
  }
\caption{Evolution of the variance of $y_1$(left) and evolution of the variance of $y_2$(right) for the Kraichnan-Orszag three-mode system with 1D random inputs.
  }
\label{fig:KO_y1y2_1d}
\end{figure}

 We study the variance of each random output $y_i$, $i=1,2,3$ on the time interval $[0,30]$. Figure \ref{fig:KO_y1y2_1d} presents the variance evolution of $y_1$ and that of $y_2$ estimated by Monte Carlo simulation with $1,000,000$ samples and adaptive mesh refinement ME-gPC with polynomial basis order 3 of reduced model and order 7 of full model under various accuracy control values $TOL_1$. Failure to capture the properties via the global gPC expansion after a short time is also shown in Figure \ref{fig:KO_y1y2_1d}. If we keep the order of the expansion constant and make the tolerance $TOL_1$ stricter, more elements are needed by the mesh refinement algorithm and higher accuracy is achieved.

\begin{table} [htbp]
\begin{center}
\begin{tabular}{llccc}
\toprule
        & N & Error of $var(y_1)$ & Error of $var(y_2)$ & Error of $var(y_3)$\\
\midrule
$TOL_1=10^{-3}$\\
$p_r=3,p_f=7$ & $40$ & $1.3e-3$ & $3.0e-3$ & $1.7e-3$\\
$p_r=4,p_f=9$ & $34$ & $4.2e-3$ & $4.0e-3$ & $2.2e-3$ \\
$p_r=5,p_f=11$ & $32$ & $2.5e-3$ & $4.1e-3$ & $2.1e-3$\\
\midrule
$TOL_1=10^{-6}$\\
$p_r=3,p_f=7$ & $110$ & $7.4e-7$ & $6.1e-6$ & $6.7e-5$\\
$p_r=4,p_f=9$ & $80$ & $3.8e-7$ & $2.8e-7$ & $6.1e-7$\\
$p_r=5,p_f=11$ & $64$ & $1.6e-7$ & $1.0e-6$ & $1.2e-6$\\
\midrule
$TOL_1=10^{-9}$\\
$p_r=3,p_f=7$ & $256$ & $4.0e-9$ & $9.4e-9$ & $2.0e-6$\\
$p_r=4,p_f=9$ & $170$ &  $6.8e-10$ & $4.5e-9$ & $6.2e-9$\\
$p_r=5,p_f=11$ & $128$ &  $1.3e-10$ & $1.0e-9$ & $1.4e-8$\\
\bottomrule
\end{tabular}
\caption{Maximum relative errors for the variance of $y_1,y_2$ and $y_3$ when $t\in[0,30]$ for the Kraichnan-Orszag three-mode system with 1D random input}
\label{tab:KO_1d}
\end{center}
\end{table}

\begin{figure}[ht]
   \centering
   \subfigure[]{%
   \includegraphics[width = 5.5cm]{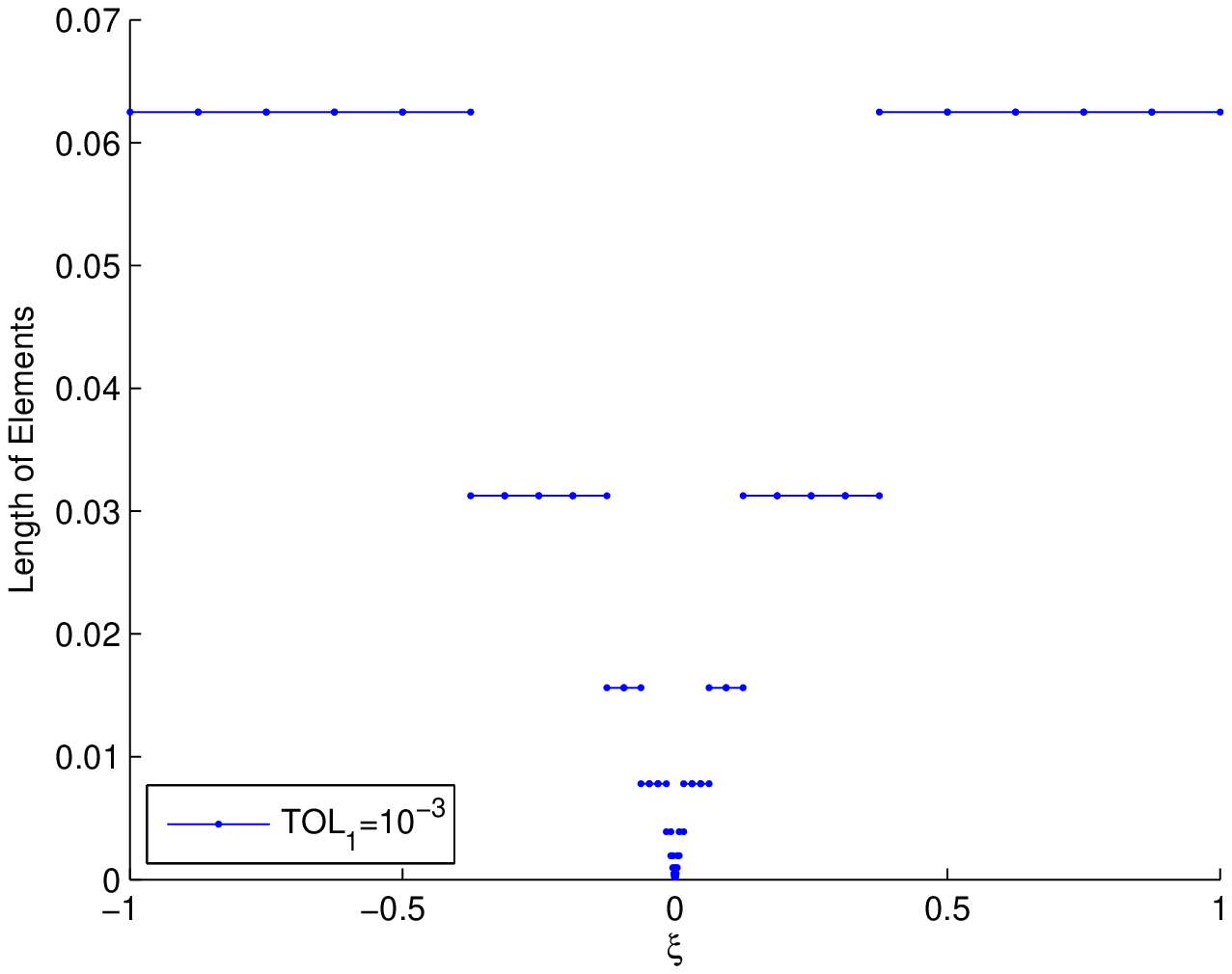}
   \label{KO_mesh_1d3_a}}
   \quad
   \subfigure[]{%
   \includegraphics[width = 5.5cm]{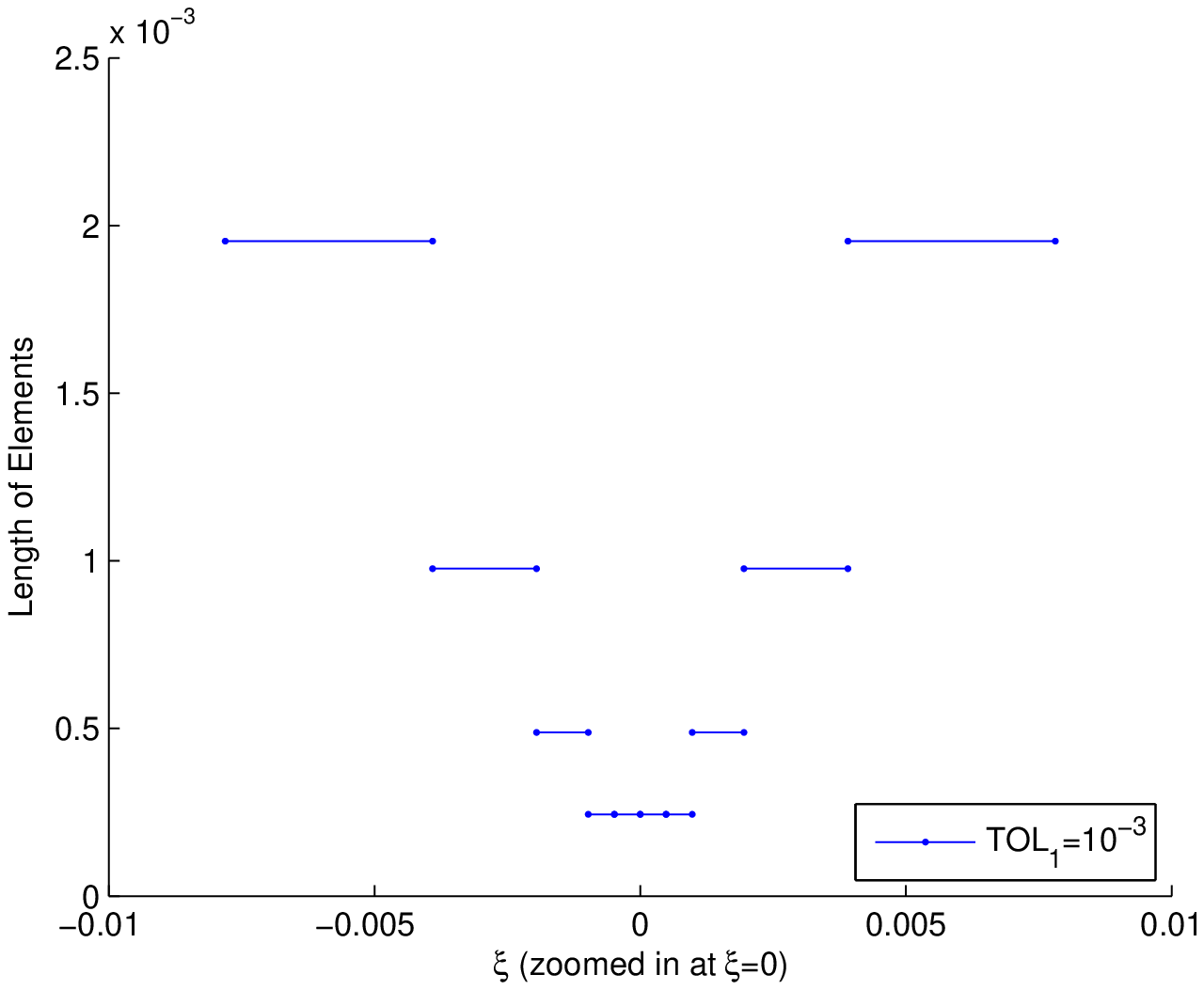}
   \label{KO_mesh_1d3_b}}

   \subfigure[]{%
   \includegraphics[width = 5.5cm]{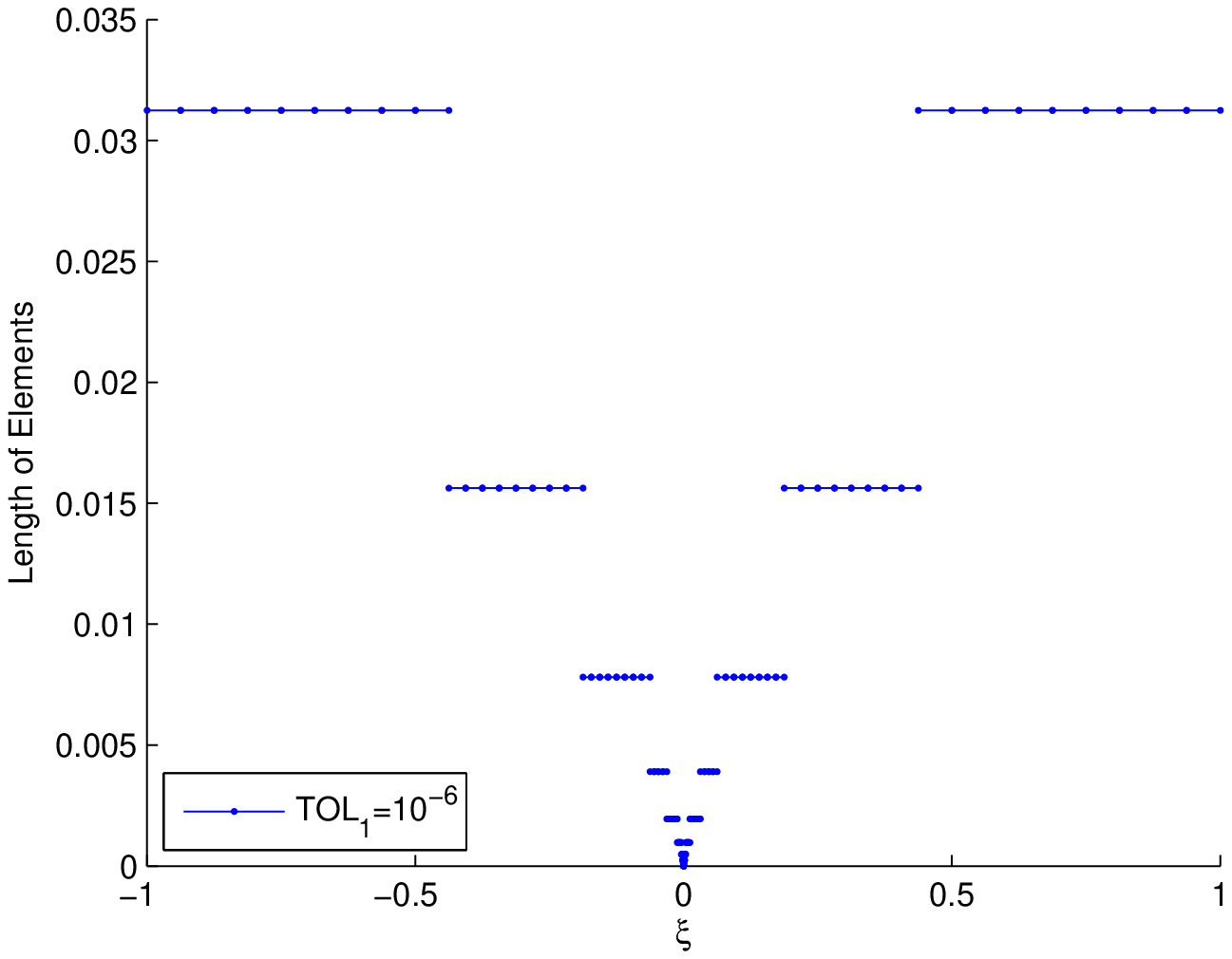}
   \label{KO_mesh_1d6_a}}
   \quad
   \subfigure[]{%
   \includegraphics[width = 5.5cm]{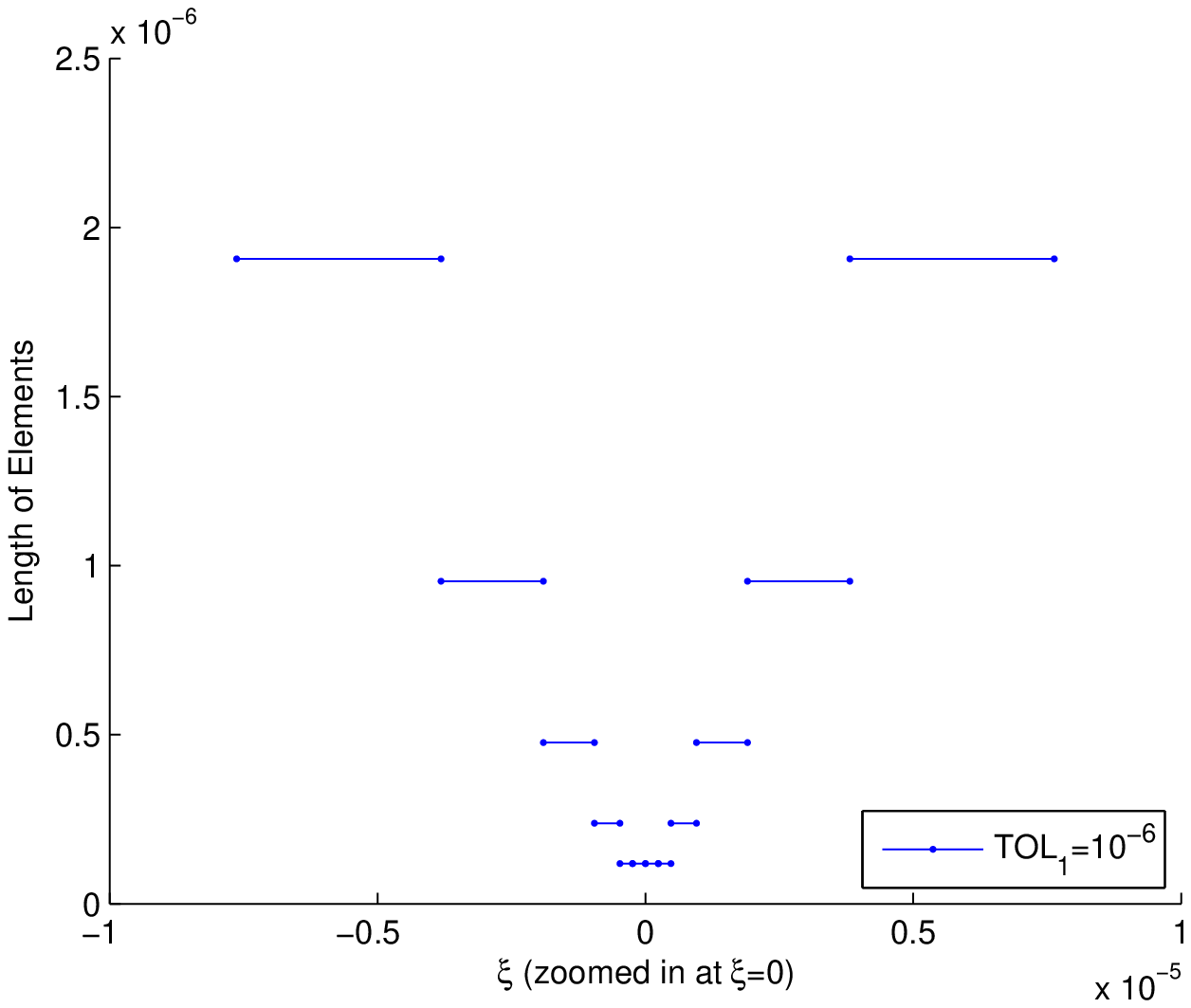}
   \label{KO_mesh_1d6_b}}
\caption{Adaptive meshes for the Kraichnan-Orszag three-mode system with 1D random input when $p_r=3,p_f=7$. (a) $TOL_1=10^{-3},N=40$; (b) zoom-in mesh of (a) near $\xi=0$; (c) $TOL_1=10^{-6},N=110$; (d) zoom-in mesh of (c) near $\xi=0$.
  }
\label{fig:KO_mesh_1d}
\end{figure}

Since ME-gPC achieves higher accuracy than the original gPC on the Kraichnan-Orszag three-mode system, we use the numerical results of reduced model of order $7$ and full model of order $15$ and $TOL_1 = 10^{-12}$ as the reference to exam the errors of different sets of reduced model and full model orders (see Table \ref{tab:KO_1d}). For a fixed value of the tolerance $TOL_1,$ higher order models require fewer elements.

Details of the adaptive meshes from our ME-gPC algorithm around $\xi=0$ are presented in Figure \ref{fig:KO_mesh_1d}. The finest meshes are around the discontinuity of the random space. It demonstrates that our mesh refinement criterion identifies accurately the discontinuity even though the elements are small. Furthermore, when $TOL_1$ is extremely small, the meshes exhibit the pattern that the closer the element is to $\xi=0,$ the smaller the element. Meanwhile the meshes are symmetric with respect to $\xi=0$ as they should be according to the symmetry of the system.

\subsubsection{Two-dimensional random input}
We study the system with initial conditions involving two independent random inputs
\begin{equation}\label{ex:KO_ini_2d}
y_1(0;\o) = 1,\quad y_2(0;\o) = 0.1\xi_1(\o),\quad y_3(0;\omega)=\xi_2(\o),
\end{equation}
where $\xi_1$ and $\xi_2$ are independent uniform random variables on $[-1,1]$.
In Figure \ref{fig:KO_mesh_2d}, we plot the evolution of the variance of each random output $y_i,i=1,2,3$ subject to a 2D random input and show the mesh of the random space at time $t=10$ generated by order 3 reduced model and order 7 full model with $TOL_1=10^{-3}$ and $TOL_2=0.1$. The smallest elements are around the discontinuity $y_2=0$ and the results are more sensitive to $\xi_1$ because of the discontinuity introduced by $\xi_1$. The results for $p_r=7$ and $p_f=9$ with accuracy control $TOL_1 = 10^{-7}$ and $TOL_2=0.1$ are selected to be the references to derive the relative errors to low ordered models. Table \ref{tab:KO_2d} presents the relative errors of the variance of $y_1,y_2$ and $y_3$ for different models and different levels of accuracy. We observe similar trends as in the 1D case, namely that more accurate models require fewer elements if the accuracy tolerance is kept fixed. Also, that stricter accuracy control requires more elements if the order of the models is fixed. To gain the same level of relative errors, the number of the elements increases faster in the 2D case than the 1D case.
\begin{figure}[ht]
   \centering
   \subfigure[]{%
   \includegraphics[width = 5.5cm]{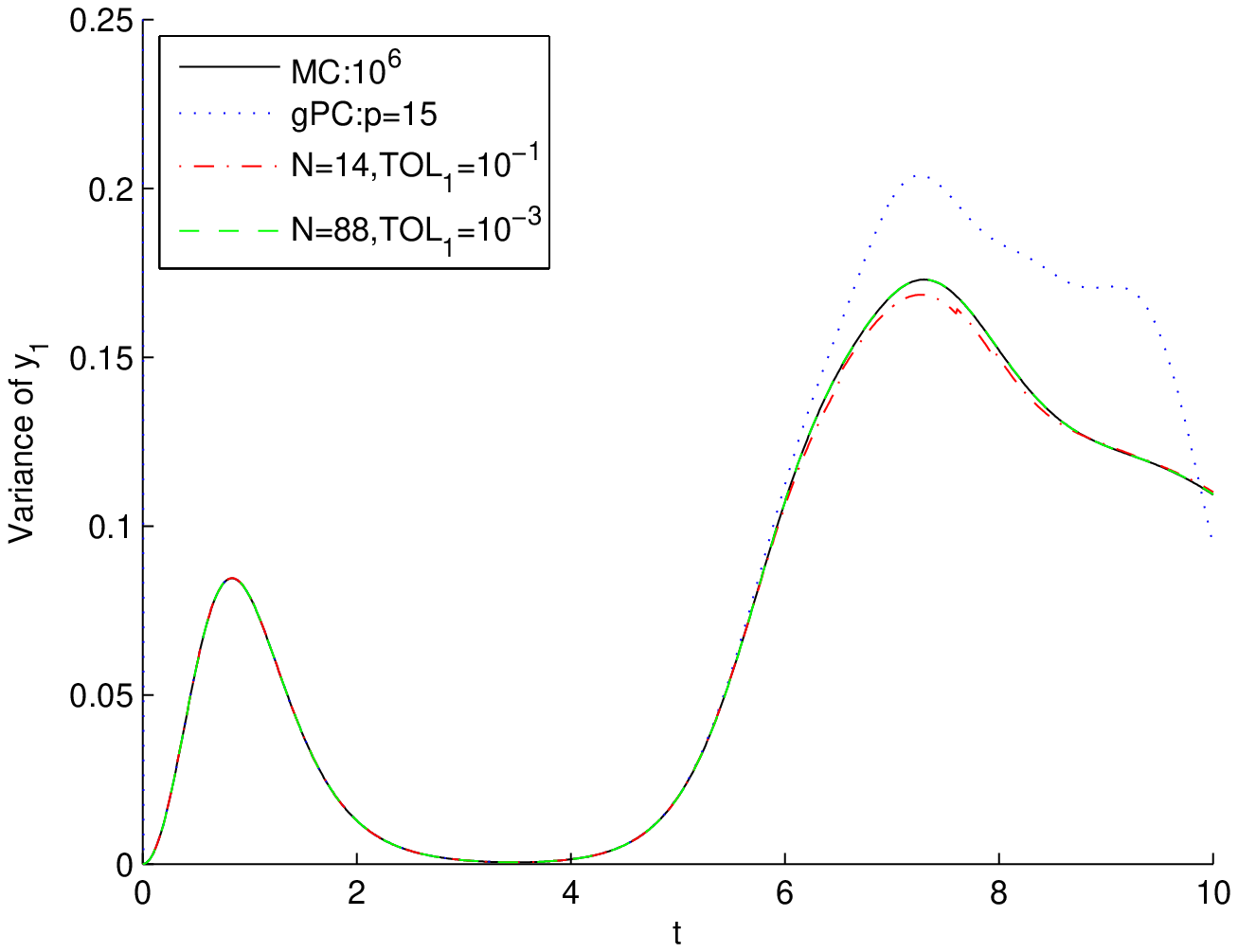}
   \label{y1_var_2d}}
   \subfigure[]{%
   \includegraphics[width = 5.5cm]{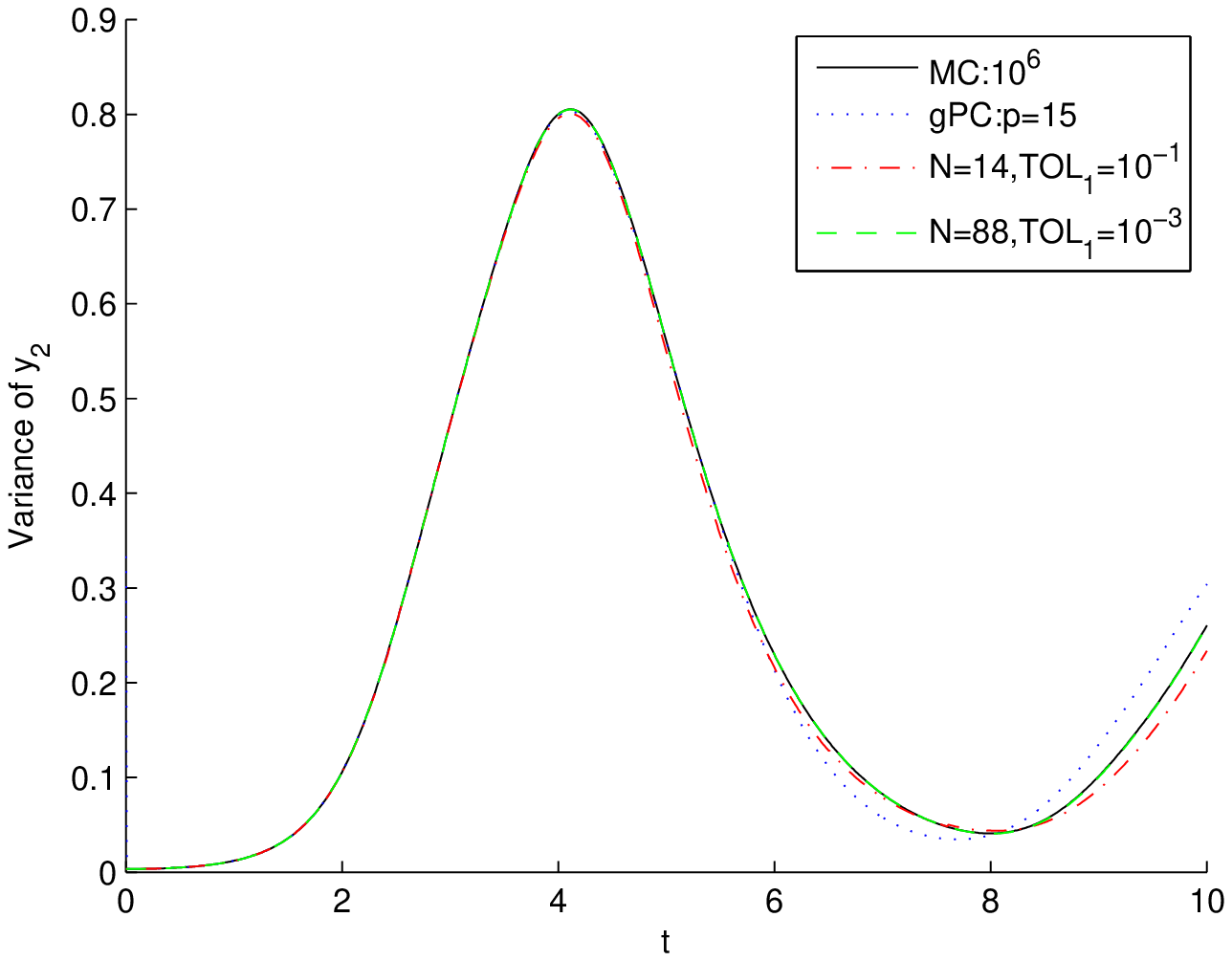}
   \label{y2_var_2d}}
   \subfigure[]{%
   \includegraphics[width = 5.5cm]{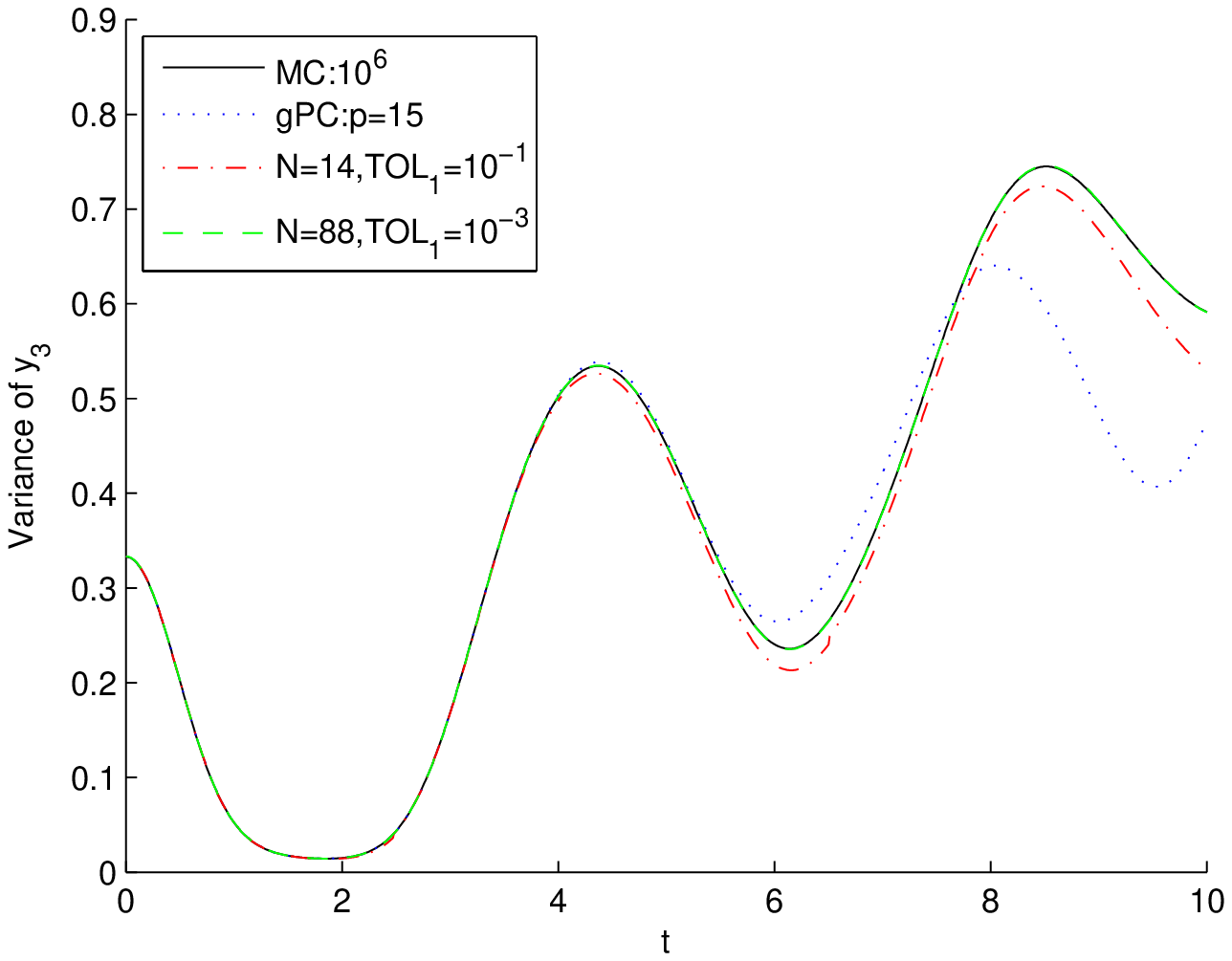}
   \label{y3_var_2d}}
   \subfigure[]{%
   \includegraphics[width = 5.5cm]{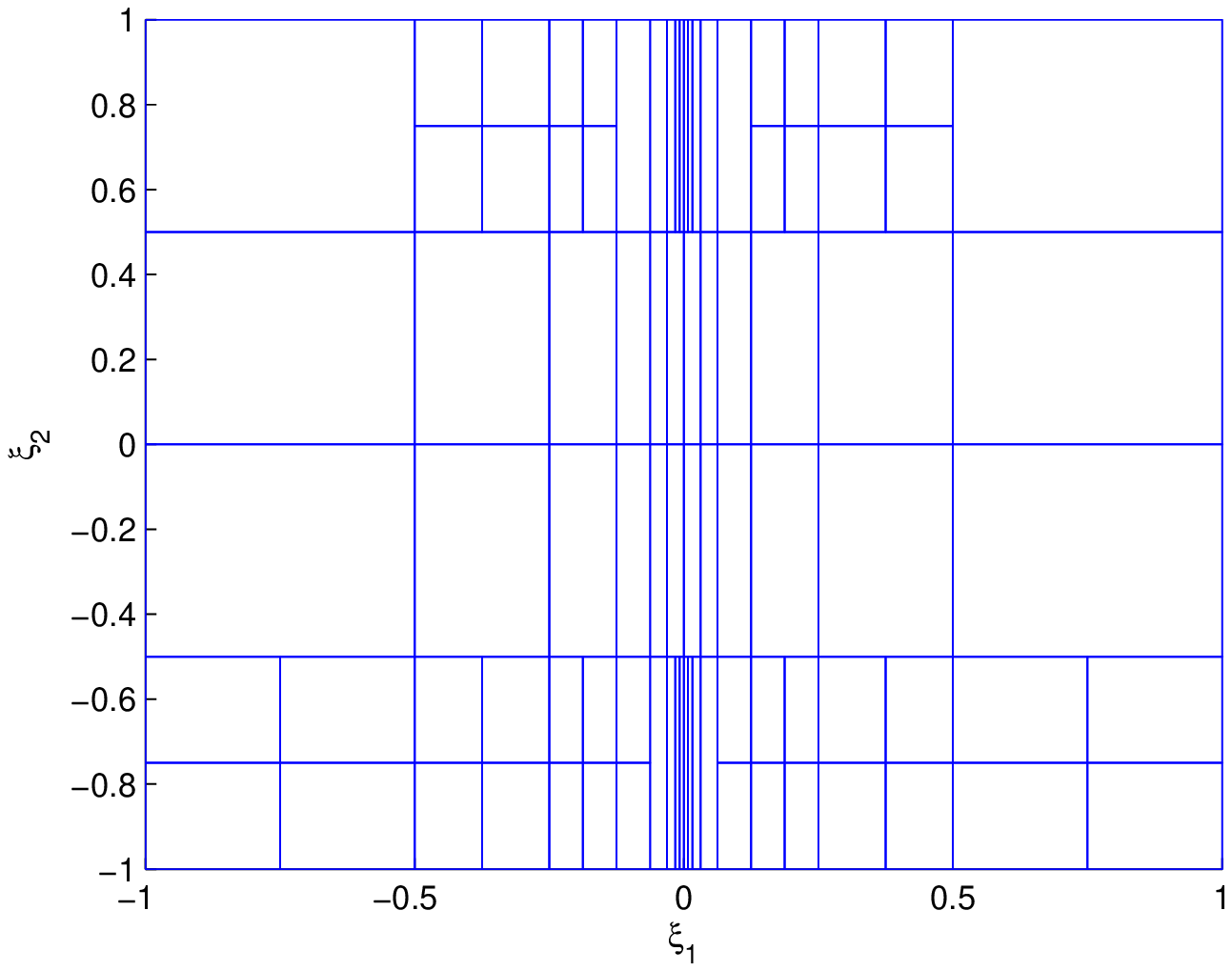}
   \label{KO_mesh_2d}}
\caption{The Kraichnan-Orszag three-mode system with 2D random inputs, $p_r=3,p_f=7,TOL_2=0.1$. (a) Evolution of variance of $y_1$; (b) Evolution of variance of $y_2$; (c) Evolution of variance of $y_3$; (d)Adaptive meshes for 2D random input when $p_r=3, p_f=7, TOL_1=10^{-3}, N=88$.
  }
\label{fig:KO_mesh_2d}
\end{figure}

\begin{table} [htbp]
\begin{center}
\begin{tabular}{llccc}
\toprule
        & N & Error of $var(y_1)$ & Error of $var(y_2)$ & Error of $var(y_3)$\\
\midrule
$TOL_1=10^{-1}$\\
$p_r=2,p_f=5$ & $20$ & $7.0e-2$ & $3.4e-1$ & $3.1e-1$\\
$p_r=3,p_f=7$ & $14$ & $3.2e-2$ & $1.4e-1$ & $1.0e-1$\\
$p_r=4,p_f=7$ & $8$ & $6.6e-2$ & $1.2e-1$ & $4.7e-2$ \\
\midrule
$TOL_1=10^{-3}$\\
$p_r=2,p_f=5$ & $124$ & $1.4e-3$ & $2.8e-2$ & $3.7e-3$\\
$p_r=3,p_f=7$ & $88$ & $3.7e-4$ & $5.2e-3$ & $1.9e-3$\\
$p_r=4,p_f=7$ & $76$ & $7.2e-4$ & $6.6e-3$ & $1.5e-3$\\
\midrule
$TOL_1=10^{-5}$\\
$p_r=2,p_f=5$ & $554$ &  $4.4e-5$ & $1.6e-4$ & $9.0e-5$\\
$p_r=3,p_f=7$ & $304$ & $1.0e-5$ & $3.3e-5$ & $5.7e-5$\\
$p_r=4,p_f=7$ & $262$ &  $9.5e-6$ & $6.0e-5$ & $2.5e-5$\\
\bottomrule
\end{tabular}
\caption{The Kraichnan-Orszag three-mode system with 2D random inputs. Maximum relative errors for the variance of $y_1,y_2$ and $y_3$ when $t\in[0,10].$}
\label{tab:KO_2d}
\end{center}
\end{table}

\subsubsection{Three-dimensional random input}
The initial conditions in this case are
\begin{equation}\label{ex:KO_ini_3d}
y_1(0;\o) = \xi_1(\o), \quad y_2(0;\o) = \xi_2(\o),\quad y_3(0;\omega)=\xi_3(\o),
\end{equation}
where $\xi_1$ and $\xi_2$ are independent uniform random variables on $[-1,1]$. In this case, discontinuities occur at $y_1=0$ and $y_2=0.$ Figure \ref{fig:KO_mesh_3d} shows the evolution of variance of $y_1$ and $y_3$ obtained from different models. The results for a global gPC expansion of order 9 diverges from the Monte Carlo results at $t\approx 3.$ On the other had, our ME-gPC algorithms obtains much better results. We choose results from $p_r=5,p_f=8,TOL_1=10^{-5}$ as reference and show the relative errors in Table \ref{tab:KO_3d}. As we can see, the number of the elements grows dramatically fast in order to gain sufficient accuracy compared to the 1D and 2D cases.
\begin{figure}[htbp]
   \centerline{
   \psfig{file=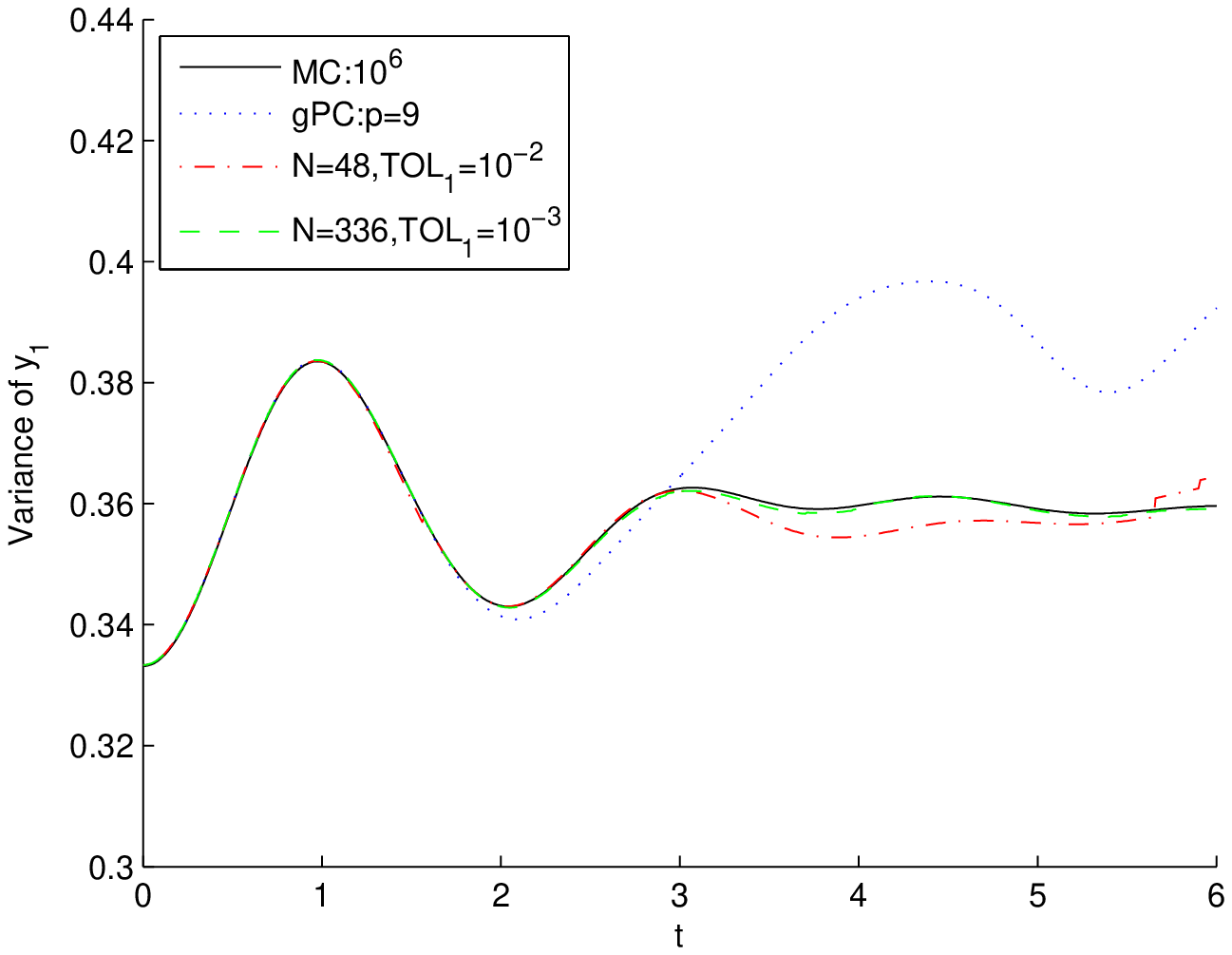,width=7cm}
   \psfig{file=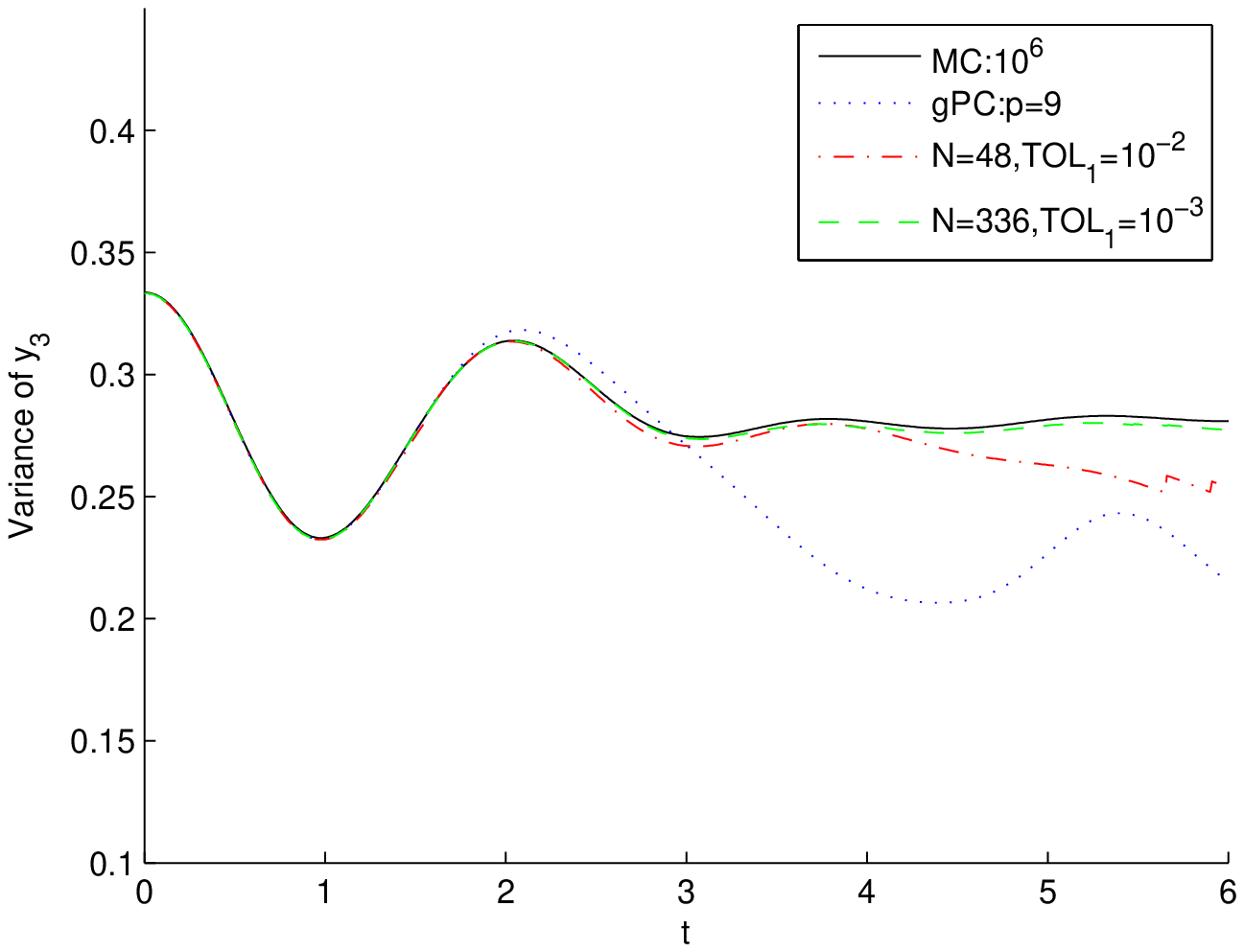,width=7cm}
  }
\caption{Evolution of the variance of $y_1$(left) and evolution of the variance of $y_3$(right) for the Kraichnan-Orszag three-mode system with 3D random inputs.
  }
\label{fig:KO_mesh_3d}
\end{figure}

\begin{table} [htbp]
\begin{center}
\begin{tabular}{llccc}
\toprule
        & N & Error of $var(y_1)$ & Error of $var(y_3)$\\
\midrule
$TOL_1=10^{-2}$\\
$p_r=2,p_f=4$ & $80$ & $1.7e-2$ & $2.4e-1$\\
$p_r=3,p_f=5$ & $48$ & $1.5e-2$ & $1.1e-1$\\
$p_r=4,p_f=7$ & $24$ & $1.2e-2$ & $7.1e-2$\\
\midrule
$TOL_1=10^{-3}$\\
$p_r=2,p_f=4$ & $368$ & $7.1e-3$ & $3.1e-2$ \\
$p_r=3,p_f=5$ & $336$ & $2.7e-3$ & $1.3e-2$ \\
$p_r=4,p_f=7$ & $136$ & $3.2e-3$ & $7.2e-3$\\
\bottomrule
\end{tabular}
\caption{The Kraichnan-Orszag three-mode system with 3D random inputs. Maximum relative errors for the variance of $y_1,y_2$ and $y_3$ when $t\in[0,6].$}
\label{tab:KO_3d}
\end{center}
\end{table}


\section{Discussion and future work}\label{sec:conclusion}
We have presented a novel method for adaptive mesh refinement in the context of uncertainty quantification which is based on model reduction. The main idea behind the proposed approach is that a good reduced model can capture accurately the transfer of activity across scales and thus can be utilized to detect when and where higher resolution is needed.  We have provided theoretical justification as to why this method is appropriate for adaptive mesh refinement. The proposed approach was implemented in the context of multi-element generalized polynomial chaos expansions. The objective was to perform uncertainty quantification in the presence of discontinuities in the random space. The numerical results for the Kraichnan-Orszag system corroborate the theoretical results.

In its current form the proposed method is applicable to problems where the source of randomness is uniformly distributed. For technical reasons, the method is not applicable in its current form to problems with more elaborate random space distributions, for example Gaussianly distributed randomness. However, as  explained in \cite{WanK_JCP05}, one can treat non-uniform sources of randomness by performing an expansion of the non-uniform randomness in a series of uniform random variables. Such a series expansion is equally applicable for our mesh refinement method and results in this direction will be presented in a future publication.


\bibliographystyle{elsarticle-num-names}
\bibliography{random}

\appendix

\section{gPC representation and reduced model of the transformed Kraichnan-Orszag three-mode system}\label{app:gpc}
We use the truncated gPC expansions to approximate the solution of \eqref{ex:KO},
\begin{equation}\label{KO_gpc_sol}
\begin{split}
\hat{y}_1(t,\xi(\o)) &=\sum_{\i\in F\cup G} \hat{y}_{1{\i}}(t)\Phi_\i(\xi(\o)),\\
\hat{y}_2(t,\xi(\o)) &=\sum_{\i\in F\cup G} \hat{y}_{2{\i}}(t)\Phi_\i(\xi(\o)),\\
\hat{y}_3(t,\xi(\o)) &=\sum_{\i\in F\cup G} \hat{y}_{3{\i}}(t)\Phi_\i(\xi(\o)).
\end{split}
\end{equation}
where $F\cup G = \{\i:0\leq|\i|\leq p_{f}\}$, and $F = \{\i:0\leq |\i|\leq p_r\}$, $p_r<p_f$, $p_r,p_f\in\mathbb{N}_0$.
Substitute \eqref{KO_gpc_sol} into \eqref{ex:KO} and perform the Galerkin projection to obtain the system of deterministic ODEs
\begin{equation}\label{KO_galerkin}
\begin{split}
\frac{d\hat{y}_{1\k}}{dt} &= \sum_{\i\in F\cup G}\sum_{\j\in F\cup G}\hat{y}_{1{\i}}\hat{y}_{3{\j}}e_{\i\j\k},\\
\frac{d\hat{y}_{2\k}}{dt} &= -\sum_{\i\in F\cup G}\sum_{\j\in F\cup G}\hat{y}_{2{\i}}\hat{y}_{3{\j}}e_{\i\j\k},\\
\frac{d\hat{y}_{3\k}}{dt} &= \sum_{\i\in F\cup G}\sum_{\j\in F\cup G}(-\hat{y}_{1{\i}}\hat{y}_{1{\j}}+\hat{y}_{2{\i}}\hat{y}_{2{\j}})e_{\i\j\k}, \quad \k\in F\cup G,
\end{split}
\end{equation}
where $e_{\i\j\k} = \int \Phi_\i\Phi_\j\Phi_\k d\P$.
For this system we choose the $t$-model as the reduced model which is given by
\begin{equation}\label{KO_galerkin_rd}
\begin{split}
\frac{d\hat{y}'_{1\k}}{dt} &= \sum_{\i\in F}\sum_{\j\in F}\hat{y}'_{1\i}\hat{y}'_{3\j}e_{\i\j\k}\\
&+t\sum_{\i\in F}\sum_{\j\in G}\sum_{\mathbf{s}\in F}\sum_{\mathbf{t}\in F}(\hat{y}'_{3\i}\hat{y}'_{1\mathbf{s}}\hat{y}'_{3\mathbf{t}}-\hat{y}'_{1\i}\hat{y}'_{1\mathbf{s}}\hat{y}'_{1\mathbf{t}}+\hat{y}'_{1\i}\hat{y}'_{2\mathbf{s}}\hat{y}'_{2\mathbf{t}})e_{\mathbf{s}\mathbf{t}\j}e_{\i\j\k}\\
\frac{d\hat{y}'_{2\k}}{dt} &= -\sum_{\i\in F}\sum_{\j\in F}\hat{y}'_{2\i}\hat{y}'_{3\j}e_{\i\j\k}\\
&+t\sum_{\i\in F}\sum_{\j\in G}\sum_{\mathbf{s}\in F}\sum_{\mathbf{t}\in F}(\hat{y}'_{3\i}\hat{y}'_{2\mathbf{s}}\hat{y}'_{3\mathbf{t}}+\hat{y}'_{2\i}\hat{y}'_{1\mathbf{s}}\hat{y}'_{1\mathbf{t}}-\hat{y}'_{2\i}\hat{y}'_{2\mathbf{s}}\hat{y}'_{2\mathbf{t}})e_{\mathbf{s}\mathbf{t}\j}e_{\i\j\k}\\
\frac{d\hat{y}'_{3\k}}{dt} &= \sum_{\i\in F}\sum_{\j\in F}(-\hat{y}'_{1\i}\hat{y}'_{1\j}+\hat{y}'_{2\i}\hat{y}'_{2\j})e_{\i\j\k}\\
&+t\sum_{\i\in F}\sum_{\j\in G}\sum_{\mathbf{s}\in F}\sum_{\mathbf{t}\in F}(-2\hat{y}'_{1\i}\hat{y}'_{1\mathbf{s}}\hat{y}'_{3\mathbf{t}}-2\hat{y}'_{2\i}\hat{y}'_{2\mathbf{s}}\hat{y}'_{2\mathbf{t}})e_{\mathbf{s}\mathbf{t}\j}e_{\i\j\k}
\end{split}
\end{equation}
For simplicity, we define the following notations:
\begin{equation}\label{function_form_f}
\begin{split}
\hat{R}_{1\k1}^{(0)}(t,\hat{y}_1(t),\hat{y}_2(t),\hat{y}_3(t)) =& \sum_{\i\in F\cup G}\sum_{\j\in F\cup G}\hat{y}_{1\i}\hat{y}_{3\j}e_{\i\j\k}\\
\hat{R}_{1\k2}^{(0)}(t,\hat{y}_1(t),\hat{y}_2(t),\hat{y}_3(t)) =&t\sum_{\i\in F\cup G}\sum_{\j\in I}\sum_{\mathbf{s}\in F\cup G}\sum_{\mathbf{t}\in F\cup G}(\hat{y}_{3\i}\hat{y}_{1\mathbf{s}}\hat{y}_{3\mathbf{t}}-\hat{y}_{1\i}\hat{y}_{1\mathbf{s}}\hat{y}_{1\mathbf{t}}\\
&+
\hat{y}_{1\i}\hat{y}_{2\mathbf{s}}\hat{y}_{2\mathbf{t}})e_{\mathbf{s}\mathbf{t}\j}e_{\i\j\k}\\
\hat{R}_{2\k1}^{(0)}(t,\hat{y}_1(t),\hat{y}_2(t),\hat{y}_3(t)) =& -\sum_{\i\in F\cup G}\sum_{\j\in F\cup G}\hat{y}_{2\i}\hat{y}_{3\j}e_{\i\j\k}\\
\hat{R}_{2\k2}^{(0)}(t,\hat{y}_1(t),\hat{y}_2(t),\hat{y}_3(t)) =&t\sum_{\i\in F\cup G}\sum_{\j\in I}\sum_{\mathbf{s}\in F\cup G}\sum_{\mathbf{t}\in F\cup G}(\hat{y}_{3\i}\hat{y}_{2\mathbf{s}}\hat{y}_{3\mathbf{t}}+\hat{y}_{2\i}\hat{y}_{1\mathbf{s}}\hat{y}_{1\mathbf{t}}\\
&-
\hat{y}_{2\i}\hat{y}_{2\mathbf{s}}\hat{y}_{2\mathbf{t}})e_{\mathbf{s}\mathbf{t}\j}e_{\i\j\k}
\end{split}
\end{equation}
\begin{equation*}
\begin{split}
\hat{R}_{3\i1}^{(0)}(t,\hat{y}_1(t),\hat{y}_2(t),\hat{y}_3(t)) =&\sum_{\i\in F\cup G}\sum_{\j\in F\cup G}(-\hat{y}_{1\i}\hat{y}_{1\j}+\hat{y}_{2\i}\hat{y}_{2\j})e_{\i\j\k}\\
\hat{R}_{3\k2}^{(0)}(t,\hat{y}_1(t),\hat{y}_2(t),\hat{y}_3(t)) =&t\sum_{\i\in F\cup G}\sum_{\j\in I}\sum_{\mathbf{s}\in F\cup G}\sum_{\mathbf{t}\in F\cup G}(-2\hat{y}_{1\i}\hat{y}_{1\mathbf{s}}\hat{y}_{3\mathbf{t}}\\
&-2\hat{y}_{2\i}\hat{y}_{2\mathbf{s}}
\hat{y}_{2\mathbf{t}})e_{\mathbf{s}\mathbf{t}\j}e_{\i\j\k}
\end{split}
\end{equation*}
and
\begin{equation}\label{function_form_r}
\begin{split}
\hat{R}_{1\k1}^{(1)}(t,\hat{y}'_1(t),\hat{y}'_2(t),\hat{y}'_3(t)) =&\sum_{\i\in F}\sum_{\j\in F}\hat{y}'_{1\i}\hat{y}'_{3\j}e_{\i\j\k}\\
\hat{R}_{1\k2}^{(1)}(t,\hat{y}'_1(t),\hat{y}'_2(t),\hat{y}'_3(t)) =&t\sum_{\i\in F}\sum_{\j\in G}\sum_{\mathbf{s}\in F}\sum_{\mathbf{t}\in F}(\hat{y}'_{3\i}\hat{y}'_{1\mathbf{s}}\hat{y}'_{3\mathbf{t}}-\hat{y}'_{1\i}\hat{y}'_{1\mathbf{s}}\hat{y}'_{1\mathbf{t}}\\
&+\hat{y}'_{1\i}\hat{y}'_{2\mathbf{s}}\hat{y}'_{2\mathbf{t}})e_{\mathbf{s}\mathbf{t}\j}e_{\i\j\k}\\
\hat{R}_{2\k1}^{(1)}(t,\hat{y}'_1(t),\hat{y}'_2(t),\hat{y}'_3(t)) =&-\sum_{\i\in F}\sum_{\j\in F}\hat{y}'_{2\i}\hat{y}'_{3\j}e_{\i\j\k}\\
\hat{R}_{2\k2}^{(1)}(t,\hat{y}'_1(t),\hat{y}'_2(t),\hat{y}'_3(t)) =&t\sum_{\i\in F}\sum_{\j\in G}\sum_{\mathbf{s}\in F}\sum_{\mathbf{t}\in F}(\hat{y}'_{3\i}\hat{y}'_{2\mathbf{s}}\hat{y}'_{3\mathbf{t}}+\hat{y}'_{2\i}\hat{y}'_{1\mathbf{s}}\hat{y}'_{1\mathbf{t}}\\
&-\hat{y}'_{2\i}\hat{y}'_{2\mathbf{s}}\hat{y}'_{2\mathbf{t}})e_{\mathbf{s}\mathbf{t}\j}e_{\i\j\k}\\
\hat{R}_{3\k1}^{(1)}(t,\hat{y}'_1(t),\hat{y}'_2(t),\hat{y}'_3(t)) =&\sum_{\i\in F}\sum_{\j\in F}(-\hat{y}'_{1\i}\hat{y}'_{1\j}+\hat{y}'_{2\i}\hat{y}'_{2\j})e_{\i\j\k}\\
\hat{R}_{3\k2}^{(1)}(t,\hat{y}'_1(t),\hat{y}'_2(t),\hat{y}'_3(t)) =&t\sum_{\i\in F}\sum_{\j\in G}\sum_{\mathbf{s}\in F}\sum_{\mathbf{t}\in F}(-2\hat{y}'_{1\i}\hat{y}'_{1\mathbf{s}}\hat{y}'_{3\mathbf{t}}\\
&-2\hat{y}'_{2\i}\hat{y}'_{2\mathbf{s}}\hat{y}'_{2\mathbf{t}})e_{\mathbf{s}\mathbf{t}\j}e_{\i\j\k}
\end{split}
\end{equation}
Note that $R_{i\k j}^{(0)}$ have the same functional form as $R_{i\k j}^{(1)}$, $i=1,2,3$, $j=1,2.$
Then the full model \eqref{KO_galerkin} and the reduced model \eqref{KO_galerkin_rd} can be rewritten as
\begin{equation}
\begin{split}
\frac{d\hat{y}_{1\k}}{dt} &= \sum_{i=1}^2 a_{1i}^{(0)}\hat{R}^{(0)}_{1\k i}(t,\hat{y}_1,\hat{y}_2,\hat{y}_3),\\
\frac{d\hat{y}_{2\k}}{dt} &= \sum_{i=1}^2a_{2i}^{(0)}\hat{R}^{(0)}_{2\k i}(t,\hat{y}_1,\hat{y}_2,\hat{y}_3),\\
\frac{d\hat{y}_{3\k}}{dt} &= \sum_{i=1}^2a_{3i}^{(0)}\hat{R}^{(0)}_{3\k i}(t,\hat{y}_1,\hat{y}_2,\hat{y}_3),\quad \k\in{F\cup G}
\end{split}
\end{equation}
where $a_{11}^{(0)} = 1$, $a_{12}^{(0)} = 0$, $a_{21}^{(0)} = 1$, $a_{22}^{(0)} = 0$, $a_{31}^{(0)} = 1$, $a_{32}^{(0)} = 0$, and
\begin{equation}
\begin{split}
\frac{d\hat{y}'_{1\k}}{dt} &= \sum_{i=1}^2 a_{1i}^{(1)}\hat{R}^{(1)}_{1\k i}(t,\hat{y}'_1,\hat{y}'_2,\hat{y}'_3),\\
\frac{d\hat{y}'_{2\k}}{dt} &= \sum_{i=1}^2a_{2i}^{(1)}\hat{R}^{(1)}_{2\k i}(t,\hat{y}'_1,\hat{y}'_2,\hat{y}'_3),
\end{split}
\end{equation}
\begin{equation*}
\begin{split}
\frac{d\hat{y}'_{3\k}}{dt} &= \sum_{i=1}^2a_{3i}^{(1)}\hat{R}^{(1)}_{3\k i}(t,\hat{y}'_1,\hat{y}'_2,\hat{y}'_3),\quad \k\in{F}
\end{split}
\end{equation*}
where $a_{11}^{(1)} = 1$, $a_{12}^{(1)} = 1$, $a_{21}^{(1)} = 1$, $a_{22}^{(1)} = 1$, $a_{31}^{(1)} = 1$, $a_{32}^{(1)} = 1$.

The goal of our adaptive mesh refinement approach is to capture the statistical properties of the solution. We have chosen as a criterion for mesh refinement the rate of change  of  $\hat{E} = \sum_{\i\in F}|y_{1\i}|^2+\sum_{\i\in F}|y_{2\i}|^2+\sum_{\i\in F}|y_{3\i}|^2.$

The rate of change of $\hat{E}$ for the full model is given by
\begin{equation}
\begin{split}
\frac{d\hat{E}}{dt}=&2\sum_{\k\in F}\sum_{i=1}^3a_{i1}^{(0)}\Re\left(\hat{R}_{i\k 1}^{(0)}(t,\hat{y}_1,\hat{y}_2,\hat{y}_3)\hat{y}^*_{i\k}\right)\\
&+2\sum_{\k\in F}\sum_{i=1}^3a_{i2}^{(0)}\Re\left(\hat{R}_{i\k 2}^{(0)}(t,\hat{y}_1,\hat{y}_2,\hat{y}_3)\hat{y}^*_{i\k}\right),
\end{split}
\end{equation}
where $(\hat{y}_{i\k})^*$ is the complex conjugate of $\hat{y}_{i\k}.$ The rate of change of $$\hat{E'} = \sum_{\i\in F}|y'_{1\i}|^2+\sum_{\i\in F}|y'_{2\i}|^2+\sum_{\i\in F}|y'_{3\i}|^2$$ for the reduced model is given by
\begin{equation}
\begin{split}
\frac{d\hat{E'}}{dt}=&2\sum_{\k\in F}\sum_{i=1}^3a_{i1}^{(1)}\Re\left(\hat{R}_{i\k 1}^{(0)}(t,\hat{y}'_1,\hat{y}'_2,\hat{y}'_3)(\hat{y}'_{i\k})^*\right)\\
&+2\sum_{\k\in F}\sum_{i=1}^3a_{i2}^{(0)}\Re\left(\hat{R}_{i\k 2}^{(0)}(t,\hat{y}'_1,\hat{y}'_2,\hat{y}'_3)(\hat{y}'_{i\k})^*\right),
\end{split}
\end{equation}
where $(\hat{y}'_{i\k})^*$ is the complex conjugate of $\hat{y}'_{i\k}.$

\section{Error of the $t$-model for the Kraichnan-Orszag three-mode system}\label{app:pro}
We will show the relation between the rate of change of $\hat{E'} = \sum_{\i\in F}|y'_{1\i}|^2+\sum_{\i\in F}|y'_{2\i}|^2+\sum_{\i\in F}|y'_{3\i}|^2=\|\mb{\hat{y}'}\|_{L_2(\O)}^2$ and the error of the $t$-model. Use the notations from Appendix. \ref{app:gpc}, and let $\mb{P}$ be the projection onto the space spanned by $\{\Phi_{\i}|\i\in F\}$.
Let $\mb{B}(y_i,y_j) = y_iy_j$, $i,j=1,2,3.$ Then, the Kraichnan-Orszag three-mode system \eqref{ex:KO} can be written as
\begin{equation}\label{ex:KO_r}
\frac{dy_1}{dt} = \mb{B}(y_1,y_3),\quad \frac{dy_2}{dt} = -\mb{B}(y_2,y_3),\quad \frac{dy_3}{dt} = -\mb{B}(y_1,y_1)+\mb{B}(y_2,y_2).
\end{equation}
and its projection
\begin{equation}\label{ex:KO_p}
\begin{split}
\frac{d\mb{P}y_1}{dt} &= \mb{PB}(y_1,y_3),\\
\frac{d\mb{P}y_2}{dt} &= -\mb{PB}(y_2,y_3),\\
\frac{d\mb{P}y_3}{dt} &= -\mb{PB}(y_1,y_1)+\mb{PB}(y_2,y_2).
\end{split}
\end{equation}
The $t$-model can be written as
\begin{equation}\label{ex:KO_t}
\begin{split}
\frac{d\hat{y}_1'}{dt} =& \mb{PB}(\hat{y}_1',\hat{y}_3')+t\mb{P}\left\{\mb{B}((\mb{I}-\mb{P})\mb{B}(\hat{y}_1',\hat{y}_3'),\hat{y}_3')\right\}\\
&+t\mb{P}\left\{\mb{B}\left(\hat{y}_1',-(\mb{I}-\mb{P})\mb{B}(\hat{y}_1',\hat{y}_1')+(\mb{I}-\mb{P})\mb{B}(\hat{y}_2',\hat{y}_2')\right)\right\},\\
\frac{d\hat{y}_2'}{dt} =& -\mb{PB}(\hat{y}_2',\hat{y}_3')-t\mb{P}\left\{\mb{B}(-(\mb{I}-\mb{P})\mb{B}(\hat{y}_2',\hat{y}_3'),\hat{y}_3')\right\}\\
&-t\mb{P}\left\{\mb{B}(\hat{y}_2',-(\mb{I}-\mb{P})\mb{B}(\hat{y}_1',\hat{y}_1')+(\mb{I}-\mb{P})\mb{B}(\hat{y}_2',\hat{y}_2'))\right\},\\
\frac{d\hat{y}_3'}{dt} =& -\mb{PB}(\hat{y}_1',\hat{y}_1')+\mb{PB}(\hat{y}_2',\hat{y}_2')\\
&-t\mb{P}\left\{\mb{B}((\mb{I}-\mb{P})\mb{B}(\hat{y}_1',\hat{y}_3'),\hat{y}_1')
+\mb{B}(\hat{y}_1',-(\mb{I}-\mb{P})\mb{B}(\hat{y}_1',\hat{y}_3'))\right\}\\
&+t\mb{P}\left\{\mb{B}(-(\mb{I}-\mb{P})\mb{B}(\hat{y}_2',\hat{y}_3'),\hat{y}_2')
+\mb{B}(\hat{y}_2',-(\mb{I}-\mb{P})\mb{B}(\hat{y}_2',\hat{y}_3'))\right\}.
\end{split}
\end{equation}

\begin{theorem}\label{le}
Let $\mb{\hat{y}}' = (\hat{y}_1',\hat{y}_2',\hat{y}_3')^{T}$ and $\mb{\Gamma} =(\Gamma_1,\Gamma_2,\Gamma_3)^T$ where
$\Gamma_1 = (\mb{I}-\mb{P})\mb{B}(\hat{y}_1',\hat{y}_3')$, $\Gamma_2 =-(\mb{I}-\mb{P})\mb{B}(\hat{y}_2',\hat{y}_3')$, $\Gamma_3 = -(\mb{I}-\mb{P})\mb{B}(\hat{y}_1',\hat{y}_1')+(\mb{I}-\mb{P})\mb{B}(\hat{y}_2',\hat{y}_2'),$ then
\begin{equation}\label{rate_of_change}
\frac{1}{2}\frac{d}{dt}\|\mb{\hat{y}'}\|_{L_2(\O)}^2 = -t\|\mb{\Gamma}\|_{L_2(\O)}^2,
\end{equation}
\end{theorem}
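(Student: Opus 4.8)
The plan is to differentiate the squared norm directly and substitute the $t$-model equations \eqref{ex:KO_t}. Writing $\|\mb{\hat{y}'}\|_{L_2(\O)}^2=\langle\hat{y}_1',\hat{y}_1'\rangle+\langle\hat{y}_2',\hat{y}_2'\rangle+\langle\hat{y}_3',\hat{y}_3'\rangle$, we have
\[
\frac{1}{2}\frac{d}{dt}\|\mb{\hat{y}'}\|_{L_2(\O)}^2=\sum_{i=1}^{3}\Big\langle\hat{y}_i',\frac{d\hat{y}_i'}{dt}\Big\rangle,
\]
into which I would plug \eqref{ex:KO_t}. The first simplification: since $\mb{P}$ is the orthogonal projection onto $\mathrm{span}\{\Phi_\i:\i\in F\}$ and each $\hat{y}_i'$ lies in its range, $\mb{P}$ is self-adjoint and $\mb{P}\hat{y}_i'=\hat{y}_i'$, so every outermost factor $\mb{P}$ in \eqref{ex:KO_t} may be dropped when paired against $\hat{y}_i'$: $\langle\hat{y}_i',\mb{P}w\rangle=\langle\hat{y}_i',w\rangle$. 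I would then split the resulting sum into the \emph{Markov} part (the terms without the prefactor $t$, coming from $\mb{PB}(\cdot,\cdot)$) and the \emph{memory} part (the terms carrying the prefactor $t$).

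For the Markov part, after dropping the projections one is left with $\langle\hat{y}_1',\mb{B}(\hat{y}_1',\hat{y}_3')\rangle-\langle\hat{y}_2',\mb{B}(\hat{y}_2',\hat{y}_3')\rangle-\langle\hat{y}_3',\mb{B}(\hat{y}_1',\hat{y}_1')\rangle+\langle\hat{y}_3',\mb{B}(\hat{y}_2',\hat{y}_2')\rangle$. Since $\mb{B}(f,g)=fg$ pointwise, each bracket is an integral $\int(\cdot)(\cdot)(\cdot)\,d\P$ that is totally symmetric in its three arguments, so the four brackets equal $+\int(\hat{y}_1')^2\hat{y}_3'$, $-\int(\hat{y}_2')^2\hat{y}_3'$, $-\int(\hat{y}_1')^2\hat{y}_3'$, $+\int(\hat{y}_2')^2\hat{y}_3'$ and cancel in pairs. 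This is the Galerkin-projected counterpart of the energy conservation $\frac{d}{dt}(y_1^2+y_2^2+y_3^2)=0$ of the Kraichnan--Orszag system.

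For the memory part, I would again use the symmetry of $\mb{B}$ to rewrite each $\langle\hat{y}_i',\mb{B}(\Gamma_j,\hat{y}_k')\rangle$ as $\langle\hat{y}_i'\hat{y}_k',\Gamma_j\rangle$ and each $\langle\hat{y}_i',\mb{B}(\hat{y}_i',\Gamma_3)\rangle$ as $\langle(\hat{y}_i')^2,\Gamma_3\rangle$, then group all terms by which $\Gamma_j$ they pair against (the memory of the $\hat{y}_3'$ equation feeds the $\Gamma_1$ and $\Gamma_2$ pairings, while the memory of the $\hat{y}_1'$ and $\hat{y}_2'$ equations feeds all three). After collecting coefficients the memory part should reduce to $-t\langle\hat{y}_1'\hat{y}_3',\Gamma_1\rangle-t\langle\hat{y}_2'\hat{y}_3',\Gamma_2\rangle+t\langle(\hat{y}_1')^2-(\hat{y}_2')^2,\Gamma_3\rangle$. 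Finally, recalling $\Gamma_1=(\mb{I}-\mb{P})(\hat{y}_1'\hat{y}_3')$, $\Gamma_2=-(\mb{I}-\mb{P})(\hat{y}_2'\hat{y}_3')$ and $\Gamma_3=-(\mb{I}-\mb{P})\big((\hat{y}_1')^2-(\hat{y}_2')^2\big)$, and using that $\mb{I}-\mb{P}$ is also a self-adjoint idempotent so $\langle f,(\mb{I}-\mb{P})f\rangle=\|(\mb{I}-\mb{P})f\|_{L_2(\O)}^2$, each pairing becomes $-\|\Gamma_j\|_{L_2(\O)}^2$; adding the vanishing Markov part gives $\frac{1}{2}\frac{d}{dt}\|\mb{\hat{y}'}\|_{L_2(\O)}^2=-t\|\mb{\Gamma}\|_{L_2(\O)}^2$.

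The only real obstacle is bookkeeping: the memory part of \eqref{ex:KO_t} has many terms and the cancellations among their numerical coefficients (in particular the way the $\hat{y}_3'$-memory contributes to the $\Gamma_1$ and $\Gamma_2$ pairings) are delicate, so the trilinear manipulations $\int fgh$ and the signs must be tracked carefully. A useful sanity check is the symmetry $y_1\leftrightarrow y_2$, $y_3\mapsto-y_3$ of the system, under which $\Gamma_1\leftrightarrow\Gamma_2$ and $\Gamma_3\mapsto-\Gamma_3$, so the final answer must be symmetric in $\Gamma_1,\Gamma_2$ and even in $\Gamma_3$; no analytic difficulty arises beyond the self-adjointness of $\mb{P}$ and $\mb{I}-\mb{P}$ and the symmetry of the quadratic nonlinearity.
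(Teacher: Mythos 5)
Your proposal is correct and follows essentially the same route as the paper's proof: differentiate $\|\mb{\hat{y}}'\|^2$ termwise, drop the outer projections by self-adjointness of $\mb{P}$, kill the Markov part via the total symmetry of the trilinear form $\langle\mb{B}(f,g),h\rangle=\int fgh\,d\P$, and collapse the memory terms onto $-t\|\Gamma_j\|^2$ using that $\mb{I}-\mb{P}$ is a self-adjoint idempotent. The one bookkeeping slip is in your displayed intermediate expression: the $\Gamma_2$ pairing should enter with a plus sign, $+t\langle\hat{y}_2'\hat{y}_3',\Gamma_2\rangle$ (the $\hat{y}_2'$-equation contributes $-t$ and the $\hat{y}_3'$-equation $+2t$ times $\int\Gamma_2\hat{y}_2'\hat{y}_3'\,d\P$), which then equals $-t\|\Gamma_2\|^2$ because $\Gamma_2=-(\mb{I}-\mb{P})\mb{B}(\hat{y}_2',\hat{y}_3')$, whereas the term as you wrote it evaluates to $+t\|\Gamma_2\|^2$; your own symmetry check (under $y_1\leftrightarrow y_2$, $y_3\mapsto-y_3$ one has $\hat{y}_1'\hat{y}_3'\mapsto-\hat{y}_2'\hat{y}_3'$ while $\Gamma_1\mapsto\Gamma_2$) flags exactly this term, and with that sign corrected the argument goes through as you describe.
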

\begin{proof}For simplicity, we use $(\cdot,\cdot)$ to denote the inner product and $\|\cdot\|$ to denote the $L_2$ norm on the random space. From \eqref{ex:KO_t}, we obtain
\begin{equation}\label{ex:KO_t_norm}
\begin{split}
\frac{1}{2}\frac{d}{dt}\|\hat{y}_1'\|^2 =& \Big(\mb{B}(\hat{y}_1',\hat{y}_3'),\hat{y}_1'\Big)+t\Big(\mb{B}(\Gamma_1,\hat{y}_3'),\hat{y}_1'\Big)+t\Big(\mb{B}(\hat{y}_1',\Gamma_3),\hat{y}_1'\Big),\\
\frac{1}{2}\frac{d}{dt}\|\hat{y}_2'\|^2 =& -\Big(\mb{B}(\hat{y}_2',\hat{y}_3'),\hat{y}_2'\Big)-t\Big(\mb{B}(\Gamma_2,\hat{y}_3'),\hat{y}_2'\big)-t\Big(\mb{B}(\hat{y}_2',\Gamma_3),\hat{y}_2'\big),\\
\frac{1}{2}\frac{d}{dt}\|\hat{y}_3'\|^2 =& \Big(-\mb{B}(\hat{y}_1',\hat{y}_1')+\mb{B}(\hat{y}_2',\hat{y}_2'),\hat{y}_3'\Big)\\
&-t\Big(\mb{B}(\Gamma_1,\hat{y}_1')
+\mb{B}(\hat{y}_1',\Gamma_1),\hat{y}_3'\Big)+t\Big(\mb{B}(\Gamma_2,\hat{y}_2')+\mb{B}(\hat{y}_2',\Gamma_2),\hat{y}_3'\Big).
\end{split}
\end{equation}
First, we claim that
\begin{equation}\label{inner_prod}
\Big(\mb{B}(f,g),h\Big) = \Big(\mb{B}(f,h),g\Big) = \Big(\mb{B}(g,h),f\Big).
\end{equation}
To show \eqref{inner_prod}, we assume $f = \sum_{\i}f_\i\Phi_\i$, $g = \sum_{\i}g_\i\Phi_\i$, $h = \sum_{\i}h_\i\Phi_\i.$ We find
\[
\begin{split}
\Big(\mb{B}(f,g),h\Big) &= \Big(\mb{B}(\sum_{\i}f_\i\Phi_\i,\sum_{\j}g_\j\Phi_\j),\sum_{\k}h_\k\Phi_\k \Big)\\
&=\sum_\i\sum_\j\sum_\k f_\i g_\j h_\k\int_{\O}\Phi_\i\Phi_\j\Phi_\k d\P.
\end{split}
\]
Obviously, it can be verified that
\[
\begin{split}
\Big(\mb{B}(f,h),g\Big) =& \sum_\i\sum_\j\sum_\k f_\i g_\j h_\k\int_{\O}\Phi_\i\Phi_\j\Phi_\k d\P,\\
\Big(\mb{B}(g,h),f\Big) = &\sum_\i\sum_\j\sum_\k f_\i g_\j h_\k\int_{\O}\Phi_\i\Phi_\j\Phi_\k d\P.
\end{split}
\]
Consequently, \eqref{inner_prod} is satisfied.
Given the fact that $\mb{P}\perp (\mb{I}-\mb{P})$, and \eqref{inner_prod}, we have
\begin{equation*}
\begin{split}
\frac{1}{2}\frac{d}{dt}\|\mb{\hat{y}'}\|^2 =
&t\Big(\mb{B}(\hat{y}_1',\Gamma_3),\hat{y}_1'\Big)-t\Big(\mb{B}(\hat{y}_2',\Gamma_3),\hat{y}_2'\big)\\
&-t\Big(\mb{B}(\Gamma_1,\hat{y}_1'),\hat{y}_3'\Big)+t\Big(\mb{B}(\hat{y}_2',\Gamma_2),\hat{y}_3'\Big)\\
=&t\Big(\mb{B}(\hat{y}_1',-(\mb{I}-\mb{P})\mb{B}(\hat{y}_1',\hat{y}_1')+(\mb{I}-\mb{P})\mb{B}(\hat{y}_2',\hat{y}_2')),\hat{y}_1'\Big)\\
&-t\Big(\mb{B}(\hat{y}_2',-(\mb{I}-\mb{P})\mb{B}(\hat{y}_1',\hat{y}_1')+(\mb{I}-\mb{P})\mb{B}(\hat{y}_2',\hat{y}_2')),\hat{y}_2'\big)\\
&-t\Big(\mb{B}(\Gamma_1,\hat{y}_1'),\hat{y}_3'\Big)+t\Big(\mb{B}(\hat{y}_2',\Gamma_2),\hat{y}_3'\Big)\\
=&-t\Big(\mb{B}(\hat{y}_1',\hat{y}_1'),(\mb{I}-\mb{P})\mb{B}(\hat{y}_1',\hat{y}_1')\Big)+t\Big(\mb{B}(\hat{y}_2',\hat{y}_2'),(\mb{I}-\mb{P})\mb{B}(\hat{y}_1',\hat{y}_1')\Big)\\
&+t\Big(\mb{B}(\hat{y}_2'\hat{y}_2'),(\mb{I}-\mb{P})\mb{B}(\hat{y}_1',\hat{y}_1')\Big)-t\Big(\mb{B}(\hat{y}_2',\hat{y}_2'),(\mb{I}-\mb{P})\mb{B}(\hat{y}_2',\hat{y}_2')\Big)\\
&-t\Big(\mb{B}(\Gamma_1,\hat{y}_1'),y_3'\Big)+t\Big(\mb{B}(\hat{y}_2',\Gamma_2),\hat{y}_3'\Big)\\
=&-t\|(\mb{I}-\mb{P})\mb{B}(\hat{y}_1',\hat{y}_1')\|^2-t\|(\mb{I}-\mb{P})\mb{B}(\hat{y}_2',\hat{y}_2')\|^2\\
&+2t\Big((\mb{I}-\mb{P})\mb{B}(\hat{y}_1',\hat{y}_1'),(\mb{I}-\mb{P})\mb{B}(\hat{y}_2',\hat{y}_2')\Big)\\
&-t\Big(\mb{B}(\hat{y}_1',\hat{y}_3'),\Gamma_1\Big)+t\Big(\mb{B}(\hat{y}_2',\hat{y}_3'),\Gamma_2\Big)\\
=&-t\|(\mb{I}-\mb{P})\mb{B}(\hat{y}_1',\hat{y}_1')-(\mb{I}-\mb{P})\mb{B}(\hat{y}_2',\hat{y}_2')\|^2\\
&-t\Big(\mb{B}(\hat{y}_1',\hat{y}_3'),(\mb{I}-\mb{P})\mb{B}(\hat{y}_1',\hat{y}_3')\Big)-t\Big(-\mb{B}(\hat{y}_2',\hat{y}_3'),-(\mb{I}-\mb{P})\mb{B}(\hat{y}_2',\hat{y}_3')\Big)\\
=&-t\|\Gamma_3\|^2-t\|\Gamma_1\|^2-t\|\Gamma_2\|^2.
\end{split}
\end{equation*}
\end{proof}

With the same notations as before, we have the following theorem which characterizes the error of the $t$-model system.
\begin{theorem}\label{le_2}
Let $\mb{y} = (y_1,y_2,y_3)^T$, and $\mb{P}\mb{y} = (\mb{P}y_1,\mb{P}y_2,\mb{P}y_3)^{T}$, where $y_1,y_2,y_3$ satisfy \eqref{ex:KO}. Then, there exist constants $A,B$, and $C$ such that
\begin{equation}\label{KO_error}
\begin{split}
\frac{1}{2}\frac{d}{dt}\|\mb{\hat{y}}'-\mb{P}\mb{y}\|^2 \leq &(A+tB)\|\hat{\mb{y}}'-\mb{P}\mb{y}\|^2+C\|(\mb{I}-\mb{P})\mb{y}\|^2\\
&+5t\|(\mb{I}-\mb{P})\mb{B}(\mb{P}y_1,\mb{P}y_3)\|^2+5t\|(\mb{I}-\mb{P})\mb{B}(\mb{P}y_2,\mb{P}y_3)\|^2\\
&+t\|(\mb{I}-\mb{P})\mb{B}(\mb{P}y_1,\mb{P}y_1)\|^2+t\|(\mb{I}-\mb{P})\mb{B}(\mb{P}y_2,\mb{P}y_2)\|^2
\end{split}
\end{equation}
\end{theorem}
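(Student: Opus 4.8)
The plan is to derive an evolution equation for the error $\mb{e} := \mb{\hat{y}}' - \mb{P}\mb{y}$ by subtracting the projected system \eqref{ex:KO_p} from the $t$-model \eqref{ex:KO_t}, then estimate $\frac{1}{2}\frac{d}{dt}\|\mb{e}\|^2 = (\dot{\mb{e}}, \mb{e})$ term by term. First I would rewrite each component of $\mb{P}\mb{y}$'s equation using the identity $\mb{PB}(y_i,y_j) = \mb{PB}(\mb{P}y_i,\mb{P}y_j) + \mb{PB}(\text{terms involving }(\mb{I}-\mb{P})\mb{y})$, so that the quadratic nonlinearity in the projected equation is expressed in terms of $\mb{P}\mb{y}$ up to a controlled remainder $C\|(\mb{I}-\mb{P})\mb{y}\|^2$ (after pairing with $\mb{e}$ and using boundedness of the solution on the finite time interval, plus Young's inequality). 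This accounts for the $C\|(\mb{I}-\mb{P})\mb{y}\|^2$ term in \eqref{KO_error} and, together with the Markov-type memory terms, the $A\|\mb{e}\|^2$ contribution.

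Next I would handle the difference of the leading (Galerkin) terms. Schematically, $\mb{PB}(\hat{y}_1',\hat{y}_3') - \mb{PB}(\mb{P}y_1,\mb{P}y_3) = \mb{PB}(e_1,\hat{y}_3') + \mb{PB}(\mb{P}y_1,e_3)$, and pairing with $\mb{e}$ and using the symmetry \eqref{inner_prod} together with uniform bounds on $\hat{y}_i'$ and $\mb{P}y_i$ (valid on the bounded time interval under consideration) yields a bound of the form $A\|\mb{e}\|^2$. The $t$-dependent memory terms in \eqref{ex:KO_t} are treated similarly: each is a trilinear expression of the form $t\,\mb{P}\{\mb{B}((\mb{I}-\mb{P})\mb{B}(\cdot,\cdot),\cdot)\}$; I would split every factor as $\hat{y}_i' = \mb{P}y_i + e_i$, expand, and separate the resulting terms into (i) those that are at least linear in $\mb{e}$, which contribute $tB\|\mb{e}\|^2$ after Young's inequality and uniform bounds, and (ii) the purely "$\mb{P}y$" terms $t\,\mb{P}\{\mb{B}((\mb{I}-\mb{P})\mb{B}(\mb{P}y_i,\mb{P}y_j),\mb{P}y_k)\}$. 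For the latter, pairing with $\mb{e}$, moving the projection around via \eqref{inner_prod}, and applying Young's inequality ($ab \le \frac12 a^2 + \frac12 b^2$, possibly with weights) converts each into a harmless $tB\|\mb{e}\|^2$ piece plus a term of the form $t\|(\mb{I}-\mb{P})\mb{B}(\mb{P}y_i,\mb{P}y_j)\|^2$ — and collecting these across the three equations produces exactly the coefficients $5,5,1,1$ displayed in \eqref{KO_error} (the $5$'s arising because the cross-term $(\mb{I}-\mb{P})\mb{B}(\mb{P}y_1,\mb{P}y_3)$ appears in several places in the $y_1$- and $y_3$-equations).

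The main obstacle I anticipate is bookkeeping: tracking how many times each of the four "orthogonal-complement stress" quantities $(\mb{I}-\mb{P})\mb{B}(\mb{P}y_i,\mb{P}y_j)$ surfaces once all the trilinear memory terms are expanded and all projections are shuffled via \eqref{inner_prod}, so that the final constants come out as stated rather than merely as some finite constants. A secondary technical point is justifying the uniform-in-time bounds on $\|\hat{y}_i'\|$ and $\|\mb{P}y_i\|$ that license absorbing everything into $A\|\mb{e}\|^2$ and $tB\|\mb{e}\|^2$; since the statement only asserts existence of constants $A,B,C$ (which may depend on the time horizon and on these a priori bounds), this is a mild assumption, and I would simply invoke boundedness of the solution on the compact time interval of interest. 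No differential inequality needs to be solved here — the theorem stops at the estimate \eqref{KO_error} itself — so once every term is bounded the proof is complete.
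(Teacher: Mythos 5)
Your proposal is sound in outline and follows the same basic strategy as the paper (energy estimate on the error, term-by-term splitting, the symmetry \eqref{inner_prod}, Young's inequality, a priori bounds), but it diverges from the paper's proof in one structurally important place: the treatment of the memory terms. You propose to expand \emph{every} occurrence of $\hat{y}_i'$ -- including those inside the unresolved fluxes $\Gamma_i=(\mb{I}-\mb{P})\mb{B}(\hat{y}_i',\hat{y}_j')$ -- as $\mb{P}y_i+e_i$ and then absorb everything that is at least linear in $\mb{e}$ into $tB\|\mb{e}\|^2$. This can be made to work, but note that the trilinear memory terms then generate contributions that are quadratic and cubic in $\mb{e}$, so after pairing with $\mb{e}$ you are bounding cubic and quartic expressions; to reduce these to $\|\mb{e}\|^2$ you genuinely need an $L^\infty$ bound on $\mb{e}$ itself (obtainable from finite-dimensionality of the resolved polynomial space together with the monotone decay of $\|\hat{\mb{y}}'\|$ guaranteed by Theorem \ref{le} and conservation of $\|\mb{y}\|$), and you should say so explicitly rather than fold it into ``uniform bounds.'' The paper avoids this entirely: it never expands the $\Gamma_i$. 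Instead it splits the pairing of the memory term with $\mb{e}$ as $(\hat{\mb{y}}',\cdot)-(\mb{P}\mb{y},\cdot)$, recognizes that the first pieces sum to exactly $\tfrac12\tfrac{d}{dt}\|\hat{\mb{y}}'\|^2=-t\|\mb{\Gamma}\|^2$ by Theorem \ref{le}, and uses this \emph{negative} term to cancel precisely the $\tfrac{t}{2}\|\Gamma_i\|^2$ remainders that Young's inequality produces in the $(\mb{P}\mb{y},\cdot)$ pieces. That exact cancellation is the heart of the paper's argument: it is what removes all $\hat{\mb{y}}'$-dependent flux terms from the right-hand side and delivers the clean coefficients $5,5,1,1$ on the $(\mb{I}-\mb{P})\mb{B}(\mb{P}y_i,\mb{P}y_j)$ terms with only $L^\infty$ control of $\mb{P}y_i$. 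Your route buys generality (it does not rely on the special energy identity of the $t$-model for this system) at the price of heavier a priori input and messier bookkeeping; with suitably weighted Young inequalities you can still push the flux coefficients below $5$ and $1$, so the stated estimate is reachable, but you should flag the quartic terms and the $L^\infty$ bound on the error as the points requiring justification.
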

\begin{proof} The left side of \eqref{KO_error} can be expressed as
\begin{equation}\label{KO_error_1}
\begin{split}
\frac{1}{2}\frac{d}{dt}\|\mb{\hat{y}}'-\mb{P}\mb{y}\|^2 =& \frac{1}{2}\frac{d}{dt}\Big(\mb{\hat{y}}'-\mb{Py},\mb{\hat{y}}'-\mb{Py}\Big)\\
=&\frac{1}{2}\sum_{i=1}^3\Big(\hat{y}_i'-\mb{P}y_i,\frac{d}{dt}(\hat{y}_i'-\mb{P}y_i)\Big).
\end{split}
\end{equation}
For $\hat{y}_1'-\mb{P}y_1$,
\begin{equation}
\begin{split}
\Big(\hat{y}_1'-\mb{P}y_1, \frac{d}{dt}(\hat{y}_1'-\mb{P}y_1)\Big) &= \Big(\hat{y}_1'-\mb{P}y_1, \mb{P}\mb{B}(\hat{y}_1',\hat{y}_3')-\mb{P}\mb{B}(y_1,y_3)\Big) \\
&+\Big(\hat{y}_1'-\mb{P}y_1, t\mb{P}\{\mb{B}(\Gamma_1,\hat{y}_3')+\mb{B}(\hat{y}_1',\Gamma_3)\}\Big)
\end{split}
\end{equation}
where $\hat{\mb{y}}'$ satisfies \eqref{ex:KO_t} and $\mb{Py}$ satisfies \eqref{ex:KO_p}.
Let $I_1 = \Big(\hat{y}_1'-\mb{P}y_1, \mb{PB}(\hat{y}_1',\hat{y}_3')-\mb{PB}(y_1,y_3)\Big)$, $I_2 = \Big(\hat{y}_1', t\mb{P}\{\mb{B}(\Gamma_1,\hat{y}_3')+\mb{B}(\hat{y}_1',\Gamma_3)\}\Big)$ and $I_3 = -\Big(\mb{P}y_1, t\mb{P}\{\mb{B}(\Gamma_1,\hat{y}_3')+\mb{B}(\hat{y}_1',\Gamma_3)\}\Big)$.
Since $\mb{P}$ is self-adjoint and $\mb{B}$ is continuous, it follows that
\begin{equation}\label{I_1}
\begin{split}
I_1 = &  \Big(\hat{y}_1'-\mb{P}y_1, \mb{B}(\hat{y}_1',\hat{y}_3')-\mb{B}(y_1,y_3)\Big)\\
\leq & \|\hat{y}_1'-\mb{P}y_1\|\|\mb{B}(\hat{y}_1',\hat{y}_3')-\mb{B}(y_1,y_3)\|\\
\leq & \|\hat{\mb{y}}-\mb{Py}\|C_1\|\hat{\mb{y}}-\mb{y}\|\\
\leq & C_1 \|\hat{\mb{y}}'-\mb{Py}\|(\|\hat{\mb{y}}'-\mb{Py}\|+\|(\mb{I}-\mb{P})\mb{y}\|)\\
= & C_1 \|\hat{\mb{y}}'-\mb{Py}\|^2+C_1\|\hat{\mb{y}}'-\mb{Py}\|\|(\mb{I}-\mb{P})\mb{y}\|,
\end{split}
\end{equation}
where $C_1$ is some constant. Also, for $I_3$ we have
\begin{equation}\label{I_3}
\begin{split}
I_3 =& -t\Big(\mb{P}y_1, \mb{P}\{\mb{B}(\Gamma_1,\hat{y}_3')+\mb{B}(\hat{y}_1',\Gamma_3)\}\Big)\\
=&-t\Big(\mb{P}y_1,\mb{P}\mb{B}(\Gamma_1,\hat{y}_3'-\mb{P}y_3)\Big)-t\Big(\mb{P}y_1,\mb{P}\mb{B}(\Gamma_1,\mb{P}y_3)\Big)\\
&-t\Big(\mb{P}y_1,\mb{P}\mb{B}(\Gamma_3,\hat{y}_1'-\mb{P}y_1)\Big)-t\Big(\mb{P}y_1,\mb{P}\mb{B}(\Gamma_3,\mb{P}y_1)\Big)\\
=&-t\Big(\mb{P}y_1'(\hat{y}_3'-\mb{P}y_3),\Gamma_1\Big)-\Big(\mb{P}y_1\mb{P}y_3, \Gamma_1\Big)\\
&-t\Big(\mb{P}y_1'(\hat{y}_1'-\mb{P}y_1),\Gamma_3\Big)-\Big(\mb{P}y_1\mb{P}y_1, \Gamma_3\Big)\\
=&-t\Big(\mb{P}y_1'(\hat{y}_3'-\mb{P}y_3),\Gamma_1\Big)-t\Big((\mb{I}-\mb{P})\mb{P}y_1\mb{P}y_3,\Gamma_1\Big)\\
&-t\Big(\mb{P}y_1(\hat{y}_1'-\mb{P}y_1),\Gamma_3\Big)-t\Big((\mb{I}-\mb{P})(\mb{P}y_1)^2,\Gamma_3\Big)\\
\leq& t\|\mb{P}y_1(\hat{y}_3'-\mb{P}y_3)\|\|\Gamma_1\|+t\|(\mb{I}-\mb{P})(\mb{P}y_1\mb{P}y_3)\|\|\Gamma_1\|\\
&+t\|\mb{P}y_1(\hat{y}_1'-\mb{P}y_1)\|\|\Gamma_3\|+t\|(\mb{I}-\mb{P})(\mb{P}y_1)^2\|\|\Gamma_3\|\\\leq&t\|\mb{P}y_1\|^2_{L^{\infty}}\|\hat{y}_3'-\mb{P}y_3\|^2+\frac{t}{4}\|\Gamma_1\|^2\\
&+t\|(\mb{I}-\mb{P})(\mb{P}y_1\mb{P}y_3)\|^2+\frac{t}{4}\|\Gamma_1\|^2\\
&+t\|\mb{P}y_1\|^2_{L^{\infty}}\|\hat{y}_1'-\mb{P}y_1\|^2+\frac{t}{4}\|\Gamma_3\|^2\\
&+t\|(\mb{I}-\mb{P})(\mb{P}y_1)^2\|^2+\frac{t}{4}\|\Gamma_3\|^2\\
\leq&t\|\mb{P}y_1\|^2_{L^{\infty}}\|\hat{\mb{y}}'-\mb{P}\mb{y}\|^2+t\|
(\mb{I}-\mb{P})(\mb{P}y_1\mb{P}y_3)\|^2+\frac{t}{2}\|\Gamma_1\|^2\\
\end{split}
\end{equation}
\begin{equation*}
\begin{split}
&+t\|(\mb{I}-\mb{P})(\mb{P}y_1)^2\|^2+\frac{t}{2}\|\Gamma_3\|^2.
\end{split}
\end{equation*}
For $\hat{y}_2'-\mb{P}y_2$, and $\hat{y}_3'-\mb{P}y_3$, we have
\begin{equation*}
\begin{split}
\Big(\hat{y}_2'-\mb{P}y_2, \frac{d}{dt}(\hat{y}_2'-\mb{P}y_2)\Big) =& \Big(\hat{y}_2'-\mb{P}y_2, -\mb{P}\mb{B}(\hat{y}_2',\hat{y}_3')+\mb{P}\mb{B}(y_2,y_3)\Big) \\
&-\Big(\hat{y}_2', t\mb{P}\{\mb{B}(\Gamma_2,\hat{y}_3')+\mb{B}(\hat{y}_2',\Gamma_3)\}\Big)\\
&+\Big(\mb{P}y_2, t\mb{P}\{\mb{B}(\Gamma_2,\hat{y}_3')+\mb{B}(\hat{y}_2',\Gamma_3)\}\Big)\\
\triangleq& J_1+J_2+J_3,\\
\Big(\hat{y}_3'-\mb{P}y_3, \frac{d}{dt}(\hat{y}_3'-\mb{P}y_3)\Big) =& \Big(\hat{y}_3'-\mb{P}y_3, -\mb{P}\mb{B}(\hat{y}_1',\hat{y}_1')+\mb{P}\mb{B}(\hat{y}_2',\hat{y}_2')\\
&+\mb{P}\mb{B}(y_1,y_1)-\mb{P}\mb{B}(\hat{y}_2',\hat{y}_2')\Big) \\
&-2\Big(\hat{y}_1', t\mb{P}\{\mb{B}(\Gamma_1,\hat{y}_1')\}\Big)+2\Big(\hat{y}_2', t\mb{P}\{\mb{B}(\Gamma_2,\hat{y}_2')\}\Big)\\
&+2\Big(\mb{P}y_1,  t\mb{P}\{\mb{B}(\Gamma_1,\hat{y}_1')\}\Big)-2\Big(\mb{P}y_2,  t\mb{P}\{\mb{B}(\Gamma_2,\hat{y}_2')\}\Big)\\
\triangleq & K_1+K_2+K_3.
\end{split}
\end{equation*}
Following the same steps as in \eqref{I_1} and \eqref{I_3}, we obtain
\begin{equation}
\begin{split}
J_1\leq & C_1 \|\hat{\mb{y}}'-\mb{Py}\|^2+C_1\|\hat{\mb{y}}'-\mb{Py}\|\|(\mb{I}-\mb{P})\mb{y}\|;\\
J_3 \leq & t\|\mb{P}y_2\|^2_{L^{\infty}}\|\hat{\mb{y}}-\mb{P}\mb{y}\|^2+t\|
(\mb{I}-\mb{P})(\mb{P}y_2\mb{P}y_3)\|^2+\frac{t}{2}\|\Gamma_2\|^2\\
&+t\|(\mb{I}-\mb{P})(\mb{P}y_2)^2\|^2+\frac{t}{2}\|\Gamma_3\|^2;\\
K_1\leq & C_2 \|\hat{\mb{y}}'-\mb{Py}\|^2+C_2\|\hat{\mb{y}}'-\mb{Py}\|\|(\mb{I}-\mb{P})\mb{y}\|, \text{for some constant} C_2;\\
K_3 \leq  &4t\|\mb{P}y_3\|^2_{L^{\infty}}\|\hat{\mb{y}}-\mb{P}\mb{y}\|^2+4t\|
(\mb{I}-\mb{P})(\mb{P}y_3\mb{P}y_1)\|^2\\
&+4t\|
(\mb{I}-\mb{P})(\mb{P}y_3\mb{P}y_2)\|^2+\frac{t}{2}\|\Gamma_1\|^2+\frac{t}{2}\|\Gamma_2\|^2.
\end{split}
\end{equation}
Finally, it is easy to verify that $I_2+J_2+K_2 = \frac{1}{2}\frac{d}{dt}\|\hat{\mb{y}}'\|^2 = -t\|\mb{\Gamma}\|^2$. Putting everything together we see that there exist constants $C_3$, $C_4$, $A$, $B$ and $C$ such that
\begin{equation}\label{t-model_er_evolution}
\begin{split}
\frac{1}{2}\frac{d}{dt}\|\hat{\mb{y}}'-\mb{P}\mb{y}\|^2\leq &C_3 \|\hat{\mb{y}}'-\mb{Py}\|^2+C_4\|\hat{\mb{y}}'-\mb{Py}\|\|(\mb{I}-\mb{P})\mb{y}\|\\
&+t\max(\|\mb{P}y_1\|^2_{L^{\infty}},\|\mb{P}y_2\|^2_{L^{\infty}},4\|\mb{P}y_3\|^2_{L^{\infty}})\|\hat{\mb{y}}-\mb{P}\mb{y}\|^2\\
&+5t\|(\mb{I}-\mb{P})(\mb{P}y_1\mb{P}y_3)\|^2+5t\|(\mb{I}-\mb{P})(\mb{P}y_2\mb{P}y_3)\|^2\\
&+t\|(\mb{I}-\mb{P})(\mb{P}y_1)^2\|^2+t\|(\mb{I}-\mb{P})(\mb{P}y_2)^2\|^2\\
\leq &(A+tB)\|\hat{\mb{y}}'-\mb{P}\mb{y}\|^2+C\|(\mb{I}-\mb{P})\mb{y}\|^2\\
&+5t\|(\mb{I}-\mb{P})(\mb{P}y_1\mb{P}y_3)\|^2+5t\|(\mb{I}-\mb{P})(\mb{P}y_2\mb{P}y_3)\|^2\\
&+t\|(\mb{I}-\mb{P})(\mb{P}y_1)^2\|^2+t\|(\mb{I}-\mb{P})(\mb{P}y_2)^2\|^2
\end{split}
\end{equation}
\end{proof}

From \eqref{t-model_er_evolution} we can see that the contribution of the $t$-model term to the error of $t$-model approximation is expressed as
\begin{equation}\label{t-model_er}
\begin{split}
&5t\|(\mb{I}-\mb{P})\mb{B}(\mb{P}y_1,\mb{P}y_3)\|^2+5t\|(\mb{I}-\mb{P})\mb{B}(\mb{P}y_2,\mb{P}y_3)\|^2\\
&+t\|(\mb{I}-\mb{P})\mb{B}(\mb{P}y_1,\mb{P}y_1)\|^2+t\|(\mb{I}-\mb{P})\mb{B}(\mb{P}y_2,\mb{P}y_2)\|^2.
\end{split}
\end{equation}
Compared with
\begin{equation}
\begin{split}
\frac{1}{2}\Big|\frac{d}{dt}\|\hat{\mb{y}}'\|^2\Big|\leq & t\big(\|(\mb{I}-\mb{P})\mb{B}(\hat{y}_1',\hat{y}_3')\|^2+\|(\mb{I}-\mb{P})\mb{B}(\hat{y}_2',\hat{y}_3')\|^2\\
&+2\|(\mb{I}-\mb{P})\mb{B}(\hat{y}_1',\hat{y}_1')\|^2+2\|(\mb{I}-\mb{P})\mb{B}(\hat{y}_2',\hat{y}_2')\|^2\big),
\end{split}
\end{equation}
we can conclude that $\frac{1}{2}\Big|\frac{d}{dt}\|\hat{\mb{y}}'\|^2\Big|$ is a good indicator of the rate of change of the error due to the $t$-model term. Meanwhile the error generated by the $t$-model term signifies the energy moving from the resolved modes to the unresolved modes of the full system. Thus the error can be controlled by controlling $\Big|\frac{d}{dt}\|\hat{\mb{y}}'\|^2\Big|$. In particular, $\Big|\frac{d}{dt}\|\hat{\mb{y}}'\|^2\Big|\leq TOL_1$ provides a good criterion for mesh refinement.

\end{document}